\DeclareMathOperator \im {Im}
\DeclareMathOperator \supp {supp}
\DeclareMathOperator \Ai {Ai}
\DeclareMathOperator \Bi {Bi}
\newtheorem{prop}{Proposition}
\newtheorem{lem}[prop]{Lemma}
\newtheorem{thm}{Theorem}
\newtheorem{que}{Question}
\numberwithin{equation}{section}
\author{Kiril Datchev}
\address{Department of Mathematics, Purdue University,  West Lafayette, IN, USA}
\email{kdatchev@purdue.edu}
\author{Long Jin}
\address{Yau Mathematical Sciences Center, Tsinghua University, Beijing, China}
\email{jinlong@tsinghua.edu.cn}
\title{Exponential lower resolvent bounds far away from trapped sets}
\thanks{The authors are very grateful to Semyon Dyatlov, Jeffrey Galkowski, Oran Gannot,  Jason Metcalfe, Vesselin Petkov, Plamen Stefanov, and Maciej Zworski for helpful discussions. Thanks also to the anonymous referees for their comments and suggestions. KD was supported by the Simons Foundation  through the Collaboration Grants for Mathematicians program, and by the National Science Foundation through grant DMS-1708511.}
\date{July 6, 2018}
\begin{document}

\begin{abstract}
 We give examples of semiclassical Schr\"odinger operators with exponentially large cutoff resolvent norms, even when the supports of the cutoff and potential are very far apart. The examples are radial, which allows us to analyze the resolvent kernel in detail using ordinary differential equation techniques. In particular, we identify a threshold spatial radius   where the  resolvent behavior changes.  We  apply these results to wave equations with radial wavespeed, identifying a corresponding threshold radius at which wave decay properties change.
\end{abstract}

\maketitle

\section{Introduction}

In the first part of this paper we study semiclassical resolvent estimates, and in the second part we apply the results to wave decay and non-decay.

\subsection{Semiclassical resolvent estimates}\label{s:semiintro} In this paper we investigate resolvent estimates for the semiclassical Schr\"odinger operator
\[P = P(h) = - h^2 \Delta + V(x),\] 
 where $V \colon \mathbb R^n \to \mathbb R$, $n \ge 2$. Initially we suppose $V \in C_c^\infty(\mathbb R^n)$, but later we will relax this condition.

\begin{que}\label{q:1}
For which $E_0>0$, and for which bounded and open sets $U \subset \mathbb R^n$, can we find an interval $I$ containing $E_0$ such that the incoming and outgoing cutoff resolvents obey
\begin{equation}\label{e:non}
\sup_{E \in I} \|{\bf 1}_U(P-E \pm i 0)^{-1}{\bf 1}_U\|_{L^2(\mathbb R^n) \to L^2(\mathbb R^n)} \le C/h,
\end{equation}
for all $h>0$ sufficiently small?
\end{que}

It is well known that the answer depends upon dynamical properties of the classical flow $\Phi(t) = \exp t(2\xi \partial_x - \partial_x V(x)\partial_\xi)$ in $T^*\mathbb R^n$. Of particular importance is the trapped set at energy $E_0$, which we denote $\mathcal K(E_0)$; this is the set of  $(x,\xi) \in T^*\mathbb R^n$ such that $|\xi|^2 + V(x) = E_0$ and $|\Phi(t)(x,\xi)|$ is bounded as $|t| \to \infty$.

If $\mathcal K(E_0)$ is empty, that is to say if $E_0$ is \textit{nontrapping}, then Robert and Tamura \cite{robtam}  show that the answer to Question~\ref{q:1} is that $U$ can be arbitrary.
Analogous results hold in much more general nontrapping situations; see e.g. \cite{v, hz} for some recent results, and see those papers and also \cite{bbr, zsurvey} for some pointers to the substantial literature on this topic.

But if $\mathcal K(E_0)$ is \textit{not} empty, then Bony, Burq, and Ramond \cite{bbr} show that there is $U$ such that for any interval $I$ containing $E_0$ we have
\begin{equation}\label{e:bbr}
 \sup_{E \in I}  \|{\bf 1}_U(P-E \pm i 0)^{-1}{\bf 1}_U\|_{L^2(\mathbb R^n) \to L^2(\mathbb R^n)}  \ge \frac{\log(1/h)}{Ch};
\end{equation}
more specifically it is enough if $T^*U$ contains an integral curve in $\mathcal K(E_0)$. Moreover, as we discuss below, the right hand side can sometimes be replaced by $e^{C/h}$.

Nevertheless, regardless of any trapping, for all $I \Subset (0,\infty)$ there exists $r_b>0$  such that
\eqref{e:non} holds whenever $U$ is disjoint from $B(0,r_b)$. 
Thus, if the distance between $T^*U$ and $\mathcal K(E_0)$ is large enough, then all losses due to trapping are removed. This was first shown by Cardoso and Vodev \cite{cv}, refining earlier work of Burq \cite{burq}, and analogous results  hold for much more general operators \cite{cv,rt,v,da,ddh,sh}.

It is not always necessary to cut off so far away: in \cite{dv} it is shown that if trapping is sufficiently mild, then we have \eqref{e:non} whenever $T^*U$ is disjoint from $\mathcal K(E_0)$. (By `trapping is sufficiently mild' we mean that the resolvent is polynomially bounded in $h^{-1}$; see \cite{dv} and also the survey \cite[\S3.2]{zsurvey} and the book \cite[Chapter 6]{dz} for more details, including sufficient conditions on $\mathcal K(E_0)$, and for references to some of the many known results of this kind.) Moreover, in that case \eqref{e:non} still holds if we replace ${\bf 1}_U$ by a microlocal cutoff vanishing only in a small neighborhood of $\mathcal K(E_0)$ in $T^*\mathbb R^n$. If $\mathcal K(E_0)$ is normally hyperbolic, then the vanishing hypothesis can be weakened further: see \cite{hv}. Propagation estimates play an important role in such results, and the connection between propagation estimates and polynomial resolvent bounds has been recently studied in \cite{bfrz}.

Our main result is that, when trapping is not mild, the situation can be dramatically different. Namely, losses due to trapping can show up very far away from the support of $V$:

\begin{thm} \label{t:first}
Suppose that $V \in C_c^\infty(B(0,1))$ is radial, $n \ge 2$, and $\min V <0$.  Let $R>1$ and let $U$ be a neighborhood of the sphere $\partial B(0,R)$: see Figure~\ref{f:bullseye}.  
There is $C > 0$ such that if
\begin{equation}\label{e:e0u}
0 < E_0  \le 1/CR^2, 
\end{equation}
then
\begin{equation}\label{e:texp}
 \sup_{E \in [E_0 - Ch, E_0 + Ch]}\| {\bf 1}_U(P-E \pm i0)^{-1} {\bf 1}_U  \|_{L^2(\mathbb R^n) \to L^2(\mathbb R^n)} \ge e^{1/Ch},
\end{equation}
for all $h>0$ small enough.
\end{thm}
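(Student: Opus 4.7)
The plan is to exploit the radial symmetry to reduce $P$ to a family of 1D semiclassical Schr\"odinger operators indexed by angular momentum $\ell$, and then to produce a shape resonance whose quasi-mode leaks exponentially into a neighborhood of $\partial B(0,R)$. I would decompose $L^2(\mathbb R^n)$ into spherical-harmonic sectors and conjugate by $r^{(n-1)/2}$; in the $\ell$-th sector $P$ becomes
\[ \tilde P_\ell = -h^2 \partial_r^2 + V(r) + \frac{h^2\bigl(\ell(\ell+n-2)+\tfrac{(n-1)(n-3)}{4}\bigr)}{r^2} \]
on $L^2((0,\infty),dr)$. Letting $r_* \in (0,1)$ be a point where the radial profile of $V$ attains its minimum $V_{\min}<0$, I would set $L = h\sqrt{\ell(\ell+n-2)+(n-1)(n-3)/4}$ and pick $\ell = \ell(h) \in \N$ so that $L \to L_0$ as $h \to 0$ for some constant $L_0 \in (R\sqrt{E_0},\, r_*\sqrt{E_0+|V_{\min}|})$. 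This interval is nonempty precisely when $E_0 \le c/R^2$ for a constant $c$ of order $r_*^2|V_{\min}|$, which is our hypothesis. For this choice, the limiting effective potential $V_{L_0}(r) := V(r) + L_0^2/r^2$ has the classical shape-resonance geometry: a well near $r_*$, a classically forbidden barrier from some $a_2 \le 1$ out to the outer turning point $a_3 = L_0/\sqrt{E_0} > R$, and an exterior allowed region, so that $R$ sits strictly inside the barrier.

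Next I would analyze the regular solution $u_-$ (vanishing at $0$) and the outgoing solution $u_+$ of $(\tilde P_\ell - E)u = 0$ by semiclassical WKB, with Airy-type matching at each turning point. In the well, $u_-$ oscillates with phase $\chi(E) = h^{-1}\!\int_0^{a_2}\!\sqrt{E-V_{L_0}}\,dr + O(1)$; connection formulas at $a_2$ write $u_-$ in the barrier as a combination of growing and decaying exponentials with coefficients proportional to $\sin\chi(E)$ and $\cos\chi(E)$, and a parallel analysis at $a_3$ gives $u_+$ inside the barrier. Computing the ($r$-independent) Wronskian from its barrier expression yields $|W[u_-,u_+](E)| \sim h^{-1}\bigl(|\sin\chi|\,e^{S_{\mathrm{barr}}/h}+|\cos\chi|\,e^{-S_{\mathrm{barr}}/h}\bigr)$, where $S_{\mathrm{barr}} = \int_{a_2}^{a_3}\!\sqrt{V_{L_0}-E_0}\,dr$. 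The quasi-bound-state condition $\sin\chi(E_n)=0$ defines a sequence $\{E_n\}$ spaced by $\sim 2\pi h/T$, with $T$ the classical period in the well (since $\chi'\sim T/(2h)$); so for $C$ sufficiently large relative to $2\pi/T$, the window $[E_0-Ch, E_0+Ch]$ contains at least one $E_n$ for every small $h$, and at such $E_n$ one has $|W(E_n)|\sim h^{-1}e^{-S_{\mathrm{barr}}/h}$.

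The radial Green's function at $E=E_n$ then satisfies $|G_\ell(R,R;E_n)| = |u_-(R)||u_+(R)|/|W(E_n)| \sim h\cdot e^{2S_{\mathrm{out}}/h}$, where $S_{\mathrm{out}} = \int_R^{a_3}\!\sqrt{V_{L_0}-E_0}\,dr>0$ since $R<a_3$: at a quasi-bound state $u_-$ is dominated by the decaying mode in the barrier, giving $|u_-(R)|\sim e^{-S_{\mathrm{in}}/h}$ with $S_{\mathrm{in}}+S_{\mathrm{out}}=S_{\mathrm{barr}}$, while the outer connection formula forces $|u_+(R)|\sim e^{S_{\mathrm{out}}/h}$. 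To translate this into an operator-norm lower bound I would use the pole expansion
\[ {\bf 1}_U(\tilde P_\ell - E)^{-1}{\bf 1}_U = \frac{{\bf 1}_U \psi \otimes {\bf 1}_U \bar\psi}{E_{\mathrm{res}}-E} + \text{regular}(E), \]
where $\psi$ is the Gamow state at $E_{\mathrm{res}}=E_n-i\Gamma_n/2$ with $\Gamma_n\sim h e^{-2S_{\mathrm{barr}}/h}$ and $\|{\bf 1}_U \psi\|_{L^2}^2 \gtrsim e^{-2S_{\mathrm{in}}/h}$ (from the decaying tail of $\psi$ in the annulus); at $E = E_n$ the pole then has operator norm $\gtrsim e^{-2S_{\mathrm{in}}/h}/\Gamma_n \sim e^{2S_{\mathrm{out}}/h}/h$. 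Since the spherical-harmonic decomposition is orthogonal and preserved by the radial cutoff ${\bf 1}_U$, restriction to the $\ell$-th sector lower-bounds $\|{\bf 1}_U (P - E_n \pm i0)^{-1}{\bf 1}_U\|$, giving \eqref{e:texp}.

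The hard part will be making the WKB construction rigorous with exponentially controlled errors, which is classical in principle (Airy expansions near turning points, Liouville-Green in the interiors) but requires tracking the $h$-dependence of the effective potential through the integer $\ell$; note that $h\ell$ only approximates $L_0$ to $O(h)$, but $S_{\mathrm{barr}}$, $S_{\mathrm{out}}$, and $T$ depend smoothly on $L$, so this only affects constants and $S_{\mathrm{out}}$ stays uniformly positive for $L_0$ in the interior of the admissible range. Secondarily, if one prefers to avoid invoking resonance-theoretic pole expansions, the same conclusion follows by pairing $(\tilde P_\ell - E_n)^{-1}$ against compactly supported test functions in $U$ whose profile is matched to $u_\pm(R)$, which keeps the argument squarely within the ODE methods highlighted in the abstract.
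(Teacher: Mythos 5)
Your plan is essentially the paper's: separate variables, locate an angular momentum for which the effective radial potential traps at energy $E_0$ with $R$ sitting inside the classically forbidden barrier, build a quasimode that leaks slowly, and lower-bound the one-dimensional resolvent kernel via the Wronskian. The paper (Lemma~\ref{l:lower} together with \S\ref{s:proof2} and Lemma~\ref{l:phi}) does exactly this. There are, however, a few genuine differences worth flagging. First, you pick $L_0^2$ anywhere in the open interval $(R^2E_0,\,r_*^2(E_0+|V_{\min}|))$, which yields a nondegenerate well; the paper instead works at the critical angular momentum $M_0 = \max_{r\le r_2}r^2(E_0 - V_0(r))$ for which $E_0$ sits at the \emph{bottom} of the well, $V_{M_0}(r_1)=V_{M_0}(r_2)=E_0$. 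The critical choice maximizes $r_2 = \sqrt{M_0/E_0}$ and hence gives the sharp threshold $-\min_r r^2V_0(r)$ rather than your $r_*^2|V_{\min}|$ (which is a lower bound for it, since $-\min r^2V_0 \ge -r_*^2V_0(r_*)$); both prove the theorem as stated, but the paper's choice also feeds directly into the sharper Theorem~\ref{t:second}. Second, to place a quasi-eigenvalue in $[E_0-Ch,E_0+Ch]$ you invoke Bohr--Sommerfeld quantization with spacing $\sim 2\pi h/T$; the paper instead takes $E$ to be the bottom eigenvalue of the interior Dirichlet problem $P_m$ on $L^2((0,r_2))$ and shows $E = E_0 + O(h)$ by a two-line variational estimate, sidestepping any phase counting. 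Both work, but the variational route is cleaner precisely because the paper's well bottom coincides with $E_0$.

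The one place your write-up should not be trusted as stated is the Gamow-state pole expansion. Resonance states are not in $L^2$, so $\|{\bf 1}_U\psi\|_{L^2}$ needs some other specification (e.g., the residue of the meromorphically continued resolvent), and you would also have to control the ``regular$(E)$'' part near $E_n$; neither is automatic. Your fallback — do everything at the real energy $E_n$ using the Wronskian representation $K = -u_-u_+/(h^2W)$ and pair against test functions supported in an annulus inside $U$ — is exactly what works and is what the paper does, with the turning-point matching made quantitative via Olver's uniform Airy remainder bounds (\cite[\S11.3]{Olver:Asymptotics}, Appendix~\ref{s:apperror}). One further small omission: since the theorem allows $U$ to be an arbitrary (not radial) neighborhood of $\partial B(0,R)$, you should first shrink to an annulus $\{R-\varepsilon < |x| < R+\varepsilon\}\subset U$ so that the cutoff commutes with the spherical-harmonic projections before reducing to the $\ell$-th sector; also note the inner boundary condition in low angular-momentum sectors need not be simply ``vanishing at $0$,'' though for the large $\ell=\ell(h)$ you use this is moot. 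With those adjustments your argument closes in essentially the same way as the paper's.
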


\begin{figure}[h]
\labellist
\small
\pinlabel $\supp\!\,V$ [l] at 98 130
\pinlabel $U$ [l] at 120 17
\endlabellist
 \includegraphics[width=5cm]{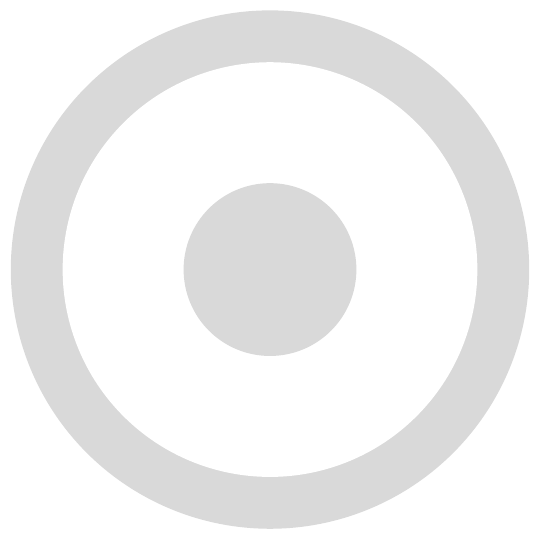}
 \caption{The relative positions of $U$ and $\supp V$.}\label{f:bullseye}
\end{figure}

The key point is that  the distance between $U$ and $\supp V$ (and hence, in particular, between $T^*U$ and the trapped set $\mathcal K(E_0)$) can be arbitrarily large. This seems to be a new phenomenon.

Moreover, the conditions on $U$ and $V$ can be weakened, and in this nice radial setting we can say  more. %For example, roughly speaking, it suffices to have 
%\begin{equation}\label{e:e0u}
%E_0 \lesssim R_0^{-2}, 
%\end{equation}
%and this is essentially optimal. 
See Theorem \ref{t:second} below for a stronger and more general result, and \eqref{e:r2asy} for a more  precise version of \eqref{e:e0u}.

Lower bounds of the form \eqref{e:texp} with $\mathcal K(E_0) \subset T^*U$ stem from $O(e^{-1/Ch})$ quasimodes, and they are well known to hold in radial situations or under a barrier assumption, namely when $\{x \mid V(x)\le E_0\}$ has a bounded connected component contained in $U$. They  have been recently investigated in \cite{ddz}, where it is shown that, for $V$ satisfying a barrier assumption, $ {\bf 1}_U(P-E \pm i0)^{-1} {\bf 1}_U$ can be replaced by $ {\bf 1}_{U_L}(P-E \pm i0)^{-1} {\bf 1}_{U_R}$, with only one of $U_L$ and $U_R$ containing $\supp V$. Lower bounds for the  continuation of $ {\bf 1}_U(P-z)^{-1} {\bf 1}_U$ as $z$ crosses the positive real axis were recently studied in \cite{bp, dw}.

In the setting of Theroem \ref{t:first}, it is clear that no quasimodes can concentrate in $U$ because of the flow invariance of support of semiclassical measures. The quasimodes we use are concentrated in the set where $V$ is negative, and we show below (in Lemma \ref{l:lower}) that their exponential decay away from this set is very slow. This is a key difference between our result and  previous lower bounds as in \cite{ddz}.

The form of the right hand side of \eqref{e:texp} is essentially optimal:  for any $I \Subset(0,\infty)$ there is $C'>0$ such that for any $U$ we have
\begin{equation}
  \sup_{E \in I}\| {\bf 1}_U(P-E \pm i0)^{-1} {\bf 1}_U  \|_{L^2(\mathbb R^n) \to L^2(\mathbb R^n)} \le Ce^{C'/h}.
\end{equation}
This was first shown by Burq \cite{burq0,burq} in a more general setting, and has been generalized still further in \cite{cv,rt,v,da,ddh,sh, gannot}.

We can define the value $R_* = R_*(E_0; V)$ for more general potentials $V$ by
\begin{equation}\label{e:rstar}
 R_* : =\inf \{r>0 \mid \eqref{e:non} \textrm{ holds for some }I\textrm{ containing }E_0\textrm{ whenever } U \cap B(0,r) = \varnothing\}.
\end{equation}
As we mentioned previously, in \cite{cv, rt, v, da,ddh, sh} it is shown that $R_*$ is finite for quite general $V$ and $E_0>0$. In Theorem~\ref{t:first} we show that $R_* \to +\infty$ as $E_0 \to 0$ for a family of examples, and in \eqref{e:r2asy} we show that $R_* \sim E_0^{-1/2}$. Let us mention here that the finiteness of $R_*$, in this setting as well as in more general ones, has been applied to a variety of problems in scattering theory: see e.g. \cite{stef, michel, ghs, chr}. There are also well-known consequences for Schr\"odinger and wave evolution: see \S\ref{s:waveintro} below. These applications motivate the following question.

\begin{que}
 What can we say about the value of $R_*$ for more general $V$ and $E_0$?
\end{que}

For example, it would be interesting to know if the hypothesis that $V$ is radial in Theorem~\ref{t:first} could be weakened. % Perhaps the formula \eqref{e:r2def} and the asymptotic \eqref{e:r2asy} could be adapted to some nonradial situations as well, by reformulating them in terms of dynamical properties of the classical flow $\Phi$. 
The lower bounds on resonance widths of \cite{dm} make it seem unlikely that the radiality hypothesis could be removed altogether.

\subsection{Wave decay and non-decay}\label{s:waveintro}

We now give an application of the above results to decay and non-decay estimates for solutions to the wave equation with radial wavespeed, for simplicity restricting attention\footnote{See \S\ref{s:not} for a discussion of why we do this.} to the case $n \ge 3$. Thus, let $c \in C^\infty(\mathbb R^n;(0,\infty))$ be radial, so that $c(x) = c_0(r)$, and suppose
\begin{equation}\label{e:crhoc0}
 r \ge \rho \quad  \Longrightarrow \quad c_0(r) = \kappa,
\end{equation}
for some positive constants $\rho$ and $\kappa$. 
Given initial conditions $w_0, \, w_1 \in C_c^\infty(\mathbb R^n)$, let $w \in C^\infty(\mathbb R^n \times \mathbb R)$ be the solution to
\[
(\partial_t^2 - c(x)^2 \Delta)w(x,t) = 0, \qquad w(x,0) = w_0, \quad \partial_t w(x,0) = w_1.
\]
Then we have conservation of energy in $\mathbb R^n$:
\begin{equation}\label{e:econs}
\begin{split}
\mathcal E = \mathcal E[c, w_0,w_1] &= \int_{\mathbb R^n} |\nabla w_0(x)|^2 + |c(x)^{-1} w_1(x)|^2 dx \\
&= \int_{\mathbb R^n} |\nabla w(x,t)|^2 + |c(x)^{-1}\partial_t w(x,t)|^2 dx  , \qquad \forall t \in \mathbb R.
\end{split}\end{equation}

Our main result in this setting concerns  energy on a bounded open set $ U \Subset \mathbb R^n$:
\begin{equation}\label{e:eudef}
 \mathcal E_{U}(t) = \mathcal E_{ U}[c, w_0,w_1](t) = \int_{ U} |\nabla w(x,t)|^2 + |c(x)^{-1}\partial_t w(x,t)|^2 dx.
\end{equation}
This energy decays logarithmically in the sense that for all $ U \Subset \mathbb R^n$ and $k \in \mathbb N$ there is $C>0$ such that
\begin{equation}\label{e:log}
\mathcal E_{U}(t) \le C \langle \log t \rangle^{-k}, \qquad \forall t \in \mathbb R.
\end{equation}
In fact, these results are very robust. Proving the conservation of energy \eqref{e:econs} is  simple: one differentiates with respect to $t$ and integrates by parts, and so \eqref{e:econs} clearly holds for very general symmetric operators. Proving the logarithmic decay \eqref{e:log} is more complicated, but it too has been established in great generality. The study of wave decay has a long history and we do not attempt to survey it here. We just mention that the first general logarithmic decay results are due to Burq \cite{burq0}, and refer the reader also to \cite{burq, bouclet, moschidis, gannot, sh2} for more recent results on logarithmic decay and for more references.

We now bring in an assumption which ensures (stable) trapping: namely we assume that 
\begin{equation}\label{e:minl}
\min (r/c_0(r))' <0.
\end{equation}
Such a situation was considered by Ralston \cite{ral}, who showed that then there are sequences of resonances converging exponentially quickly to the real axis, and it is well-known that in particluar this means \eqref{e:log} cannot be improved if $T^*U$ contains the trapped set (see  \cite[\S7]{hosm} for a recent version of such a result in the setting of general relativity). Note also that if instead we had $\min (r/c_0(r))' >0$ then the problem would be nontrapping (see  \cite[p. 571]{ral}.)

%The first assumption is equivalent to the assumption (i)  in Ralston's paper \cite[p. 572]{ral}.

%Our main result in this setting concerns local energy with respect to a cutoff, defined by
%\begin{equation}\label{e:echi}
% \mathcal E_\chi(t) = \mathcal E_\chi[c,w_0,w_1](t) = \int_{\mathbb R^n} \left|\nabla \left[\chi(x) w(x,t)\right]|^2 + |c(r)^{-1} \partial_t \left[\chi(x) w(x,t)\right]\right|^2 dx,
%\end{equation}
%where $\chi \in C_c^\infty(\mathbb R^n)$. 
To state our result we will also need the threshold radius
\begin{equation}\label{e:rcdef}
 R_c = \kappa \max_{r \in [0,\rho]} r/c_0(r),
\end{equation}
%this is just \eqref{e:r2comp} with $E_0 = c_0^{-2}$ and $V_0 = c_0^{-2} - c^{-2}$. We assume for convenience that
and we assume that
\begin{equation}\label{e:rcrho}
R_c>\rho.
\end{equation}

\begin{thm}\label{t:wave}
 Fix $c$ satisfying \eqref{e:crhoc0} and \eqref{e:rcrho}.
  \begin{enumerate}
  \item If $U \Subset \mathbb R^n$ is disjoint from the closed ball $\overline{B(0,R_c)}$, then there is $C>0$ such that
\begin{equation}\label{e:twdecay}
\int_{-\infty}^\infty \mathcal E_U(t) dt \le C \mathcal E,
\end{equation}
for all $w_0, \, w_1 \in C_c^\infty(\mathbb R^n)$.
\item If $U \Subset \mathbb R^n$  contains the sphere $\partial B(0,R)$ for some $R \in [\rho,R_c]$, then there is no $C>0$ such that \eqref{e:twdecay} holds for all $w_0, \, w_1 \in C_c^\infty(\mathbb R^n)$.
 \end{enumerate}
\end{thm}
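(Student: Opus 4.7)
The plan is to reduce both parts to cutoff resolvent bounds on the real axis for the semiclassical Schr\"odinger operator $P(h) = -h^2\Delta + V$ at energy $E_0 = 1/\kappa^2$, where $V(x) = 1/\kappa^2 - 1/c(x)^2$ and $h = 1/\lambda$. By \eqref{e:crhoc0} and \eqref{e:rcrho}, $V$ is radial, smooth, compactly supported in $\overline{B(0,\rho)}$, and satisfies $\min V < 0$. The factorization $-c^2\Delta - \lambda^2 = c^2(-\Delta - \lambda^2/c^2)$ gives
\[
(-c^2\Delta - \lambda^2 \mp i0)^{-1} = h^2\, (P(h) - E_0 \mp i0)^{-1}\, c^{-2},
\]
so cutoff bounds transfer between the semiclassical and the wave resolvents up to the harmless factor $h^2$. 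Standard Kato-type smoothing arguments (cf.\ \cite{burq0, burq}) then furnish the equivalence: \eqref{e:twdecay} holds for all initial data if and only if $\sup_{\lambda \in \mathbb R} \bigl\| {\bf 1}_U (-c^2\Delta - \lambda^2 \mp i0)^{-1} {\bf 1}_U \bigr\|_{L^2 \to L^2} < \infty$, low frequencies being handled by the absence of a zero-energy resonance in dimension $n \ge 3$.

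For part~(1), let $U \Subset \mathbb R^n \setminus \overline{B(0,R_c)}$. The Hamiltonian flow of $P$ at energy $E_0$ is that of $|\xi|^2 = 1/c(x)^2$; angular momentum $L = r|\xi_\perp|$ is conserved, and trapping is governed by the profile $r \mapsto r/c_0(r)$, whose maximum $R_c/\kappa$ is attained at some $r_* \in (0,\rho)$. The trapped set sits at $r = r_*$, and every escaping trajectory has its outer turning point at a radius strictly less than $R_c$. Thus $U$ is separated from the trapping region in the Cardoso-Vodev sense \cite{cv}, yielding
\[
\| {\bf 1}_U (P - E_0 \mp i0)^{-1} {\bf 1}_U \| \le C/h,
\]
uniformly for small $h$. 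Transferring gives $\| {\bf 1}_U (-c^2\Delta - \lambda^2 \mp i0)^{-1} {\bf 1}_U \| = O(1/\lambda)$ at high $\lambda$; combined with standard low-frequency bounds this gives a uniform bound, and \eqref{e:twdecay} follows from the equivalence above.

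For part~(2), let $U \supset \partial B(0,R)$ for some $R \in [\rho, R_c]$. We apply Theorem~\ref{t:second} (the refined radial version of Theorem~\ref{t:first}) to $P(h)$: the more precise version \eqref{e:r2asy} of the threshold condition \eqref{e:e0u} is exactly $R \le R_c$, so our hypothesis meets it and yields
\[
\sup_{E \in [E_0 - Ch, E_0 + Ch]} \| {\bf 1}_U (P - E \mp i0)^{-1} {\bf 1}_U \| \ge e^{1/Ch}.
\]
Pulling this back to the cutoff wave resolvent (the factor $h^2$ being harmless) destroys uniform boundedness on $\mathbb R$, so by the equivalence above \eqref{e:twdecay} cannot hold for all initial data.

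The main technical obstacles are (i) making the Kato-type reduction fully rigorous in the natural energy functional setting, with careful control of low frequencies via $n \ge 3$, and (ii) identifying the geometric threshold appearing in the refined Theorem~\ref{t:second} with the quantity $R_c$ defined in \eqref{e:rcdef} --- a step that translates the classical mechanics of the radial flow, via the profile $r/c_0(r)$, simultaneously into the Cardoso-Vodev separation hypothesis (for part~1) and the exponential lower bound hypothesis (for part~2).
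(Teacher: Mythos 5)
Your overall architecture (Kato smoothing to reduce to resolvent bounds on the real axis, plus the rescaling $h=\lambda^{-1}$, $V=\kappa^{-2}-c^{-2}$, $E_0=\kappa^{-2}$, and the identification $R_c=r_2$) is the same as the paper's. But there are genuine gaps in both parts.

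For part (2), the equivalence you state is not the one Kato smoothing gives. The $AA^*$/Plancherel argument produces the operator $\chi\left[(B-\lambda+i0)^{-1}-(B-\lambda-i0)^{-1}\right]\chi$, i.e.\ the density of the spectral measure, and \eqref{e:twdecay} fails precisely when \emph{this difference} is unbounded in $\lambda$ (this is \eqref{e:wexp}). A lower bound on $\|{\bf 1}_U(P-E\pm i0)^{-1}{\bf 1}_U\|$ alone does not imply a lower bound on the difference: the incoming and outgoing boundary values could in principle be large but nearly equal. This is exactly why the paper proves the asymptotic \eqref{e:krr'midmid} for $K$ itself (with its phase $-e^{-i\pi/6}$) rather than only for $|K|$ --- one needs $|\im K|$ to be exponentially large. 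Your appeal to the norm lower bound of Theorem~\ref{t:second} therefore does not close the argument. A second, related gap: only the energy $E=E_0=\kappa^{-2}$ corresponds to a real wave frequency $\lambda$ under the rescaling, so the interval bound \eqref{e:texpasy} (a supremum over $E\in[E_0-Ch,E_0+Ch]$) cannot be ``pulled back'' to the wave resolvent. One must use the fixed-energy statement \eqref{e:texpasyseq}, valid along a sequence $h_j\to0$; establishing that sequence (by arranging $E_0$ to be a Dirichlet eigenvalue of $P_{m_j}$ on $(0,r_2)$, which uses $\max V<E_0$) is a nontrivial step of the paper that your proposal skips.

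For part (1), citing Cardoso--Vodev \cite{cv} does not suffice: their theorem supplies \emph{some} radius $r_b$ beyond which the nontrapping bound holds, with no control on how $r_b$ compares to $R_c$; obtaining the bound for every $U$ disjoint from $\overline{B(0,R_c)}$, i.e.\ at the sharp threshold, is the content of \eqref{e:tnontr} in Theorem~\ref{t:second}, proved via the pointwise kernel bounds of Lemmas~\ref{l:noturn} and \ref{l:oneturn}. Your dynamical observation about outer turning points is the right heuristic for why $R_c$ is the threshold, but it is not a proof that the general black-box/Carleman argument of \cite{cv} applies at that radius. Replacing the citation of \cite{cv} by \eqref{e:tnontr} (with $r_2=R_c$, which follows from $\Phi(r)=r^2/c_0(r)^2$ and \eqref{e:rcrho}) repairs this part.
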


\noindent\textbf{Remarks:}

\begin{enumerate}
 \item  One can check that \eqref{e:rcrho} implies \eqref{e:minl}. We make the stronger assumption \eqref{e:rcrho} because it simplifies our work, and because the most interesting examples have $R_c \gg \rho$.
 \item It is easy to construct families of examples such that $R_c \to +\infty$ with $\rho$ and $\kappa$ fixed. One way is to take $\psi \in C_c^\infty([0,1);[0,1])$ such that $\psi \equiv 1$ near $0$ and put
\[
c_0(r) = 1 - s \psi(r),
\]
with $s \in (0,1)$. Then $R_c \to \infty$ as $s \to 1$.
\item We see that $R_c$ is a threshold at which wave decay behavior changes, just as in Theorem \ref{t:second} we see that $r_2$ is a threshold at which resolvent norm behavior changes. Actually, by setting $E_0 = \kappa^{-2}$ and $V_0 = \kappa^{-2} - c_0^{-2}$ we have $R_c=r_2$ (see \S\ref{s:proof2} for more), and so $R_c$ is also a threshold for the behavior of the resolvent  $(-c^2 \Delta - \lambda^2)^{-1}$: see Lemma \ref{l:bres}.
\item We expect that the second part of Theorem \ref{t:wave} can be strengthened to take into account a possible loss of derivatives as follows: if $U \Subset \mathbb R^n$  contains the sphere $\partial B(0,R)$ for some $R \in [\rho,R_c]$, then for any $N \in \mathbb N$, there is no $C>0$ such that 
\[
 \int_{-\infty}^\infty \mathcal E_U(t) dt \le C \left(\|w_0\|^2_{H^N(\mathbb R^n)} + \|w_1\|^2_{H^{N-1}(\mathbb R^n)}\right) ,
\]
holds for all $w_0, \, w_1 \in C_c^\infty(\mathbb R^n)$. (Thanks to Jason Metcalfe for suggesting this comment.)
\end{enumerate}

We can interpret \eqref{e:twdecay} as an `exterior' wave decay estimate. Many variations of such estimates, including different types of smoothing and Strichartz estimates, have been established. See \cite{botz, mmt, mmtt, bgh, bouclets, miz, cw, cm, rt, mst, bcmp} and references therein for results where behavior away from some compact set is better than behavior in sets which overlap trapping. Our result seems to give the first examples of `bad' behavior extending arbitrarily far from the trapped set.

\subsection{Outline of the rest of the paper} In \S\ref{s:main} we state the main resolvent estimates of the paper, Theorem \ref{t:second}. In \S\ref{s:ode} we prove pointwise resolvent kernel bounds for a family of semiclassical ordinary differential operators, approximating the solutions by Airy functions using the remainder bounds of Olver \cite{Olver:Asymptotics}.  In \S\ref{s:proof2} we prove Theorem~\ref{t:second} and use it to prove Theorems \ref{t:first} and \ref{t:wave}.

\subsection{Notation}\label{s:not} In this paper $\Delta \le 0$ is the Euclidean Laplacian on $\mathbb R^n$ for some $n \ge 2$, $h>0$ is a (small) semiclassical parameter, ${\bf 1}_U$ is the characteristic function of $U$, $C>0$ is a constant which may change from line to line, $A \Subset B$ means that the closure of $A$ is a compact subset of $B$, $B(a,b)$ is the  ball with center $a$ and radius $b$, the sphere $\partial B(a,b)$ is its boundary, $r=|x|$ is the radial coordinate in $\mathbb R^n$, $\langle t \rangle = (1+t^2)^{1/2}$, and $\sum_\pm Q(\pm) = Q(+) + Q(-)$. $\Ai$ and $\Bi$ are Airy functions, see Appendix \ref{s:airy}.

The radius $\rho$ and wavespeed $\kappa$ are defined in \eqref{e:crhoc0}, and the radius $R_c$ is defined in \eqref{e:rcdef}. The potentials $V$, $V_0$, and $V_m$ are defined in \eqref{e:vmdef} and the preceding sentences. The angular momentum $M_0$ and the radii $r_2$ and $r_1$ are defined in terms of the potential $V$ and the energy level $E_0$ in \eqref{e:m0def}, \eqref{e:r2def}, and \eqref{e:r1def} respectively: see also Figure \ref{f:v0} and Lemma \ref{l:phi} for more on these important quantities. The Schr\"odinger operator $P_m$ is defined in \eqref{e:radop}, its domain $\mathcal D$ is defined in terms of the boundary condition $\mathcal B$ immediately afterwards, and its resolvent kernel $K(r,r')$ is then given in \eqref{e:kdef}. We also sometimes use the domain $\mathcal D_{r_2}$ given in \eqref{e:dr2def}. The angular momenta $m_j$ are defined in terms of the spherical eigenvalues $\sigma_j$ in \eqref{e:mjdef}.

In \S\ref{s:wave} we use the homogeneous Sobolev space $\dot H^1(\mathbb R^n)$, defined to be the completion of $C_c^\infty(\mathbb R^n)$ with respect to the norm $u \mapsto \|\nabla u\|_{L^2(\mathbb R^n)}$. When $n =2$ this is not a space of distributions and various technical difficulties arise (e.g. multiplication by a function in $C_c^\infty(\mathbb R^n)$ is not a bounded operator). For simplicity, in  \S\ref{s:waveintro} and \S\ref{s:wave} we stick to the case $n \ge 3$ (so that, in particular, Sobolev embedding implies $ \dot H^1(\mathbb R^n) \subset L^{\frac {2n}{n-2}}(\mathbb R^n)$), but see \cite{sh2} for methods which cover the case $n=2$. Note that in Theorems \ref{t:first} and \ref{t:second} these difficulties do not appear and we allow $n \ge 2$.

\section{Main theorem}\label{s:main}

In this section we state our main semiclassical resolvent estimates. We begin with the assumptions, which are weaker but more complicated than the ones for Theorem \ref{t:first}. 

Let $n \ge 2$, and let $V \colon\mathbb R^n\setminus\{0\} \to \mathbb R$ be radial, so that $V(x) = V_0(r)$, and suppose\footnote{To keep things  simpler while still capturing  the interesting phenomena, one can restrict attention to the case $V_0(r) \in C^\infty_c([0,\infty))$.}  $V_0(r) \in r^{-2}C^3([0,\infty))$ is bounded below and  $\left|r^{2+k}\partial_r^{k} V_0(r)\right|$ is bounded for all
 $r >0$ and $k \in \{0, \, 1, \, 2, \, 3\}$. 

Then $P= -h^2 \Delta +V$ is selfadjoint on a domain containing $C_c^\infty(\mathbb R^n \setminus \{0\})$, and we fix such a domain. For $E>0$ we can define and study the incoming and outgoing resolvents $(P-E\pm i0)^{-1}$ using separation of variables as we recall in \S\ref{s:proof2} below.

For $m \in \mathbb R$, let
\begin{equation}\label{e:vmdef}
 V_m(r) = V_0(r) + m r^{-2}.
\end{equation}
This is the effective potential which arises when we write the Laplacian in polar coordinates; we think of $m$ as the angular momentum.
For $E_0 > 0$, put
\begin{equation}\label{e:m0def}
M_0 = M_0(E_0) = \sup\{m\ge0 \mid V_m^{-1}(E_0) \textrm{ has at least two points}\},
\end{equation}
and suppose 
\begin{equation}\label{e:mass}  M_0 \textrm{ is finite.}\end{equation}
%and observe that the supremum is finite and achieved if $E_0>0$ is small enough. 
The trapping we use occurs at the angular momentum $M_0$, and we will also need the following two radii.
Put
\begin{equation}\label{e:r2def}
r_2 = r_2(E_0) =  \max V_{M_0}^{-1}(E_0),
\end{equation}
and suppose 
\begin{equation}\label{e:monass} V'_{M_0}(r) < 0 \textrm{ for all }r \ge r_2.\end{equation}
Put
\begin{equation}\label{e:r1def}
 r_1 = r_1(E_0) =  \max \left(V_{M_0}^{-1}(E_0)\setminus\{r_2(E_0)\}\right).
\end{equation}
Note that if $V_0$ is compactly supported and $\min V_0<0$, then the assumptions \eqref{e:mass} and \eqref{e:monass} are automatically satisfied for $E_0>0$ sufficiently small; we also have more explicit formulas for $M_0$, $r_1$, and $r_2$, which we derive in \S\ref{s:4.2} below. These assumptions imply that $V_{M_0}'(r_1) = 0$, so that the trapped set $\mathcal K(E_0)$ contains circular orbits in $T^*\partial B(0,r_1)$, and these are the trapped orbits that we will use (in Lemma \ref{l:lower}) to prove exponential lower bounds. %(We use these trapped orbits implicitly to prove exponential lower bounds in Lemma \ref{l:lower}, in particular in the construction of $u_0$ there.)
See Figure \ref{f:v0}.

\begin{figure}[h]
\labellist
\small
\pinlabel $E_0$ [l] at 5 75
\pinlabel $r_1$ [l] at 72 16
\pinlabel $r_2$ [l] at 261 16
\endlabellist
 \includegraphics[width=10cm]{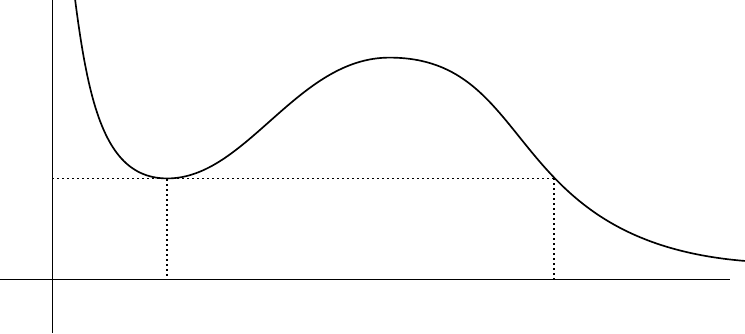}
 \caption{A possible graph of $V_{M_0}$.}\label{f:v0}
\end{figure}

\begin{thm} \label{t:second}
Let $V$, $E_0$, $r_1$, and $r_2$ be as above. 

If $U \subset \mathbb R^n$ is bounded, open, and disjoint from a neighborhood of $\overline{B(0,r_2)}$, then there is an interval $I$ containing $E_0$ and a constant $C_0$ such that
\begin{equation}\label{e:tnontr}
 \sup_{E \in I}\|  {\bf 1}_U(P-E \pm i0)^{-1} {\bf 1}_U  \|_{L^2(\mathbb R^n) \to L^2(\mathbb R^n)} \le C_0/h,
\end{equation}
for all $h>0$ small enough.

On the other hand, let $U_L$ and $U_R$ be bounded open sets in $\mathbb R^n$ containing spheres $\partial B(0,r_L)$ and $\partial B(0,r_R)$ respectively, such that $\min(r_L,r_R) \in [r_1,r_2]$. Then there are constants $ C_1$ and $C_2$ such that
\begin{equation}\label{e:texpasy}
 \sup_{E \in [E_0 - C_1h, E_0 + C_1h]}\| {\bf 1}_{U_L}(P-E\pm i0)^{-1} {\bf 1}_{U_R}  \|_{L^2(\mathbb R^n) \to L^2(\mathbb R^n)} \ge e^{C_2/h},
\end{equation}
for all $h>0$ small enough. If in addition we have $\max V < E_0$, then
\begin{equation}\label{e:texpasyseq}
 \| {\bf 1}_{U_L}(P-E_0\pm i0)^{-1} {\bf 1}_{U_R}  \|_{L^2(\mathbb R^n) \to L^2(\mathbb R^n)} \ge e^{C_2/h},
\end{equation}
for $h$ tending to $0$ along a sequence of positive values.
\end{thm}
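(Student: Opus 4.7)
The plan is to exploit the radial symmetry to reduce to a family of one-dimensional problems, apply the pointwise Green's function bounds of \S\ref{s:ode}, and then reassemble. Expanding in spherical harmonics, the kernel of $(P-E\pm i 0)^{-1}$ decomposes as a sum over $j$ of $K_{m_j}(r,r')\,Y_j(\omega)\overline{Y_j(\omega')}$, where each $K_{m_j}$ is the Green's function of the one-dimensional operator $P_{m_j} = -h^2\partial_r^2 + V_{m_j}(r)$ (possibly with an $h^2(n-1)(n-3)/(4r^2)$ correction) with the appropriate condition at $r=0$ and the outgoing or incoming condition at $\infty$. The semiclassical ODE bounds of \S\ref{s:ode}, obtained via Olver's Airy-function approximations, control $K_{m_j}$ in every regime: oscillatory, barrier, and classically forbidden centrifugal tail.

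For the upper bound \eqref{e:tnontr}, when $U$ lies in $\{r>r_2+\epsilon\}$, the monotonicity hypothesis \eqref{e:monass} implies that for $m$ in a neighborhood of $M_0$ the outer region is classically allowed and nontrapping, while for $m$ far from $M_0$ the effective potential $V_m$ is either uniformly large on $U$ (producing Agmon-type exponential decay in $m_j$, which makes the sum over $j$ convergent) or stays entirely in the oscillatory region (where standard WKB gives an $O(1/h)$ bound on $K_{m_j}$). Combining uniform $O(1/h)$ pointwise bounds on $K_{m_j}$ with orthonormality of the $Y_j$ on the sphere then yields \eqref{e:tnontr}.

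For the lower bound \eqref{e:texpasy}, I would construct a WKB quasimode $u_h$ for $P_{m_j}$, with $m_j$ chosen as close as possible to $M_0$, concentrated in the well $\{V_{m_j}<E_0\}\cap\{r<r_1\}$, satisfying $(P_{m_j}-E)u_h = O(e^{-1/Ch})\|u_h\|$ for some $E\in[E_0-C_1 h, E_0+C_1 h]$; the $O(h)$ window absorbs both the Bohr--Sommerfeld quantization condition and the mismatch between the continuous parameter $M_0$ and the discrete angular momenta $m_j$. The crucial non-standard input, promised as Lemma~\ref{l:lower}, is that $u_h$ continues across the barrier $[r_1,r_2]$ and into the outer oscillatory region with only slow (polynomial in $h^{-1}$) decay, so that $|u_h|$ evaluated at $r_L$ and at $r_R$ is comparable to its in-well values. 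Plugging the quasimode into the standard resolvent lower bound (pairing against test functions supported in $U_L$ and $U_R$) and using the 1D Green's function formula $K_{m_j}(r,r') = \phi_-(r_<)\phi_+(r_>)/W$, one obtains $|K_{m_j}(r_L,r_R)| \gtrsim e^{C_2/h}$, which after reassembly via spherical harmonics gives the claimed operator-norm bound. The extra hypothesis $\max V < E_0$ in \eqref{e:texpasyseq} rules out interior turning points that would obstruct placing the pole exactly at $E_0$, and a subsequence-of-$h$ argument then lets one slide a resonance of $P_{m_j}$ onto $E_0$. The main technical obstacle is precisely the quantization mismatch between $M_0$ and the discrete $m_j$; a secondary difficulty is uniform ODE control at the turning points $r_1$ and $r_2$, handled by the Olver/Airy machinery of \S\ref{s:ode}.
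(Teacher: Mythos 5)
Your overall framework---separation into spherical harmonics, reduction to the one-dimensional Green's functions $K_{m_j}(r,r')=u_0(r_<)u_1(r_>)/(h^2W)$, Olver--Airy control at the turning points, the choice of $m_j=M_0+O(h)$ with the energy pinned to an interior Dirichlet eigenvalue $E=E_0+O(h)$, and the identification of the quantization mismatch as the issue absorbed by the $O(h)$ energy window---is the paper's. Your sketch of the upper bound \eqref{e:tnontr} is essentially the paper's proof; just note that after separation of variables the operator norm is a \emph{supremum} over $j$, not a sum, and that for large $m_j$ the turning point $V_{m_j}^{-1}(E)$ can land inside $U$, where the pointwise bound degenerates like $|E-V_{m_j}(r)|^{-1/4}$; one therefore passes to a Hilbert--Schmidt bound, for which this integrable singularity is harmless.

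The lower bound, however, rests on a mechanism that is not correct. You assert that the quasimode ``continues across the barrier $[r_1,r_2]$ and into the outer oscillatory region with only slow (polynomial in $h^{-1}$) decay, so that $|u_h|$ at $r_L$ and $r_R$ is comparable to its in-well values.'' It does not: the interior eigenfunction obeys the usual Agmon decay $|u_0(r)|\approx e^{-\frac1h\int_{r_1}^{r}\sqrt{V_m-E}}\,\sup|u_0|$ through the classically forbidden annulus, so its values on $U_L$ and $U_R$ are exponentially small compared to the well; and if your claim were true it would yield exponential lower bounds also when both radii exceed $r_2$, contradicting the first half of the theorem. The actual mechanism (Lemma \ref{l:lower}) concerns the two solutions entering the Green's function, each normalized at the outer turning point $R\approx r_2$: the outgoing solution $u_1$, of modulus $\approx 1$ for $r\ge r_{2+}$, automatically carries a nonvanishing $\Bi$ (dominant) component and so grows like $e^{S(r')/h}$, with $S(r')=\int_{r'}^{R}\sqrt{V_m-E}$, as $r'$ moves inward through $(r_1,r_2)$; and the Dirichlet condition $u_0(r_2)=0$, read at the turning point where $\Ai$ and $\Bi$ are comparable, forces the $\Bi$ coefficient of $u_0$ to be of size $1$ rather than exponentially small. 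This is the entire point of choosing $E$ to be an interior eigenvalue: for generic $E$ the recessive-at-$0$ solution is purely dominant toward $r_2$, its $\Bi$ coefficient is exponentially small, and $K$ stays $O(h^{-1})$. Since the Wronskian is only $O(h^{-1})$, one gets $|K(r,r')|\approx h^{-1}e^{S(r)/h}$ for $r\in(r_1,r_2)$, $r'>r_2$, and $\approx h^{-1}e^{(S(r)+S(r'))/h}$ when both radii lie in $(r_1,r_2)$---which is exactly why at least one of $r_L,r_R$ must lie in $[r_1,r_2]$. Your proposal supplies neither the lower bound on the $\Bi$ component of $u_0$ nor the exponential growth of $u_1$ into the barrier, and the ``standard pairing'' argument you invoke does not apply because the quasimode is neither supported nor large in $U_L\cup U_R$; as written, the key exponential lower bound on $|K_{m_j}(r_L,r_R)|$ is unjustified.
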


The main point is the value  $r_2 = r_2(E_0)$, which corresponds to $R_*$ from \eqref{e:rstar}. Comparing \eqref{e:tnontr} with \eqref{e:texpasy} and \eqref{e:texpasyseq} we see that $r_2$ is a threshold at which the behavior of the resolvent changes.

We now discuss the values of $C_1$ and $C_2$, which come from Lemma \ref{l:lower} below. The former is related to an eigenvalue of an interior problem, and the latter to an Agmon distance, both for the effective potential $V_{M_0}$. Such eigenvalues are known to approximate real parts of resonances near the real axis, while  Agmon distances correspond to imaginary parts of the same resonances: see  \cite{hs} (especially \S 11 of that paper) and \cite{flm} for results in a well-in-an-island setting, and \cite[Corollary, \S5]{nsz} for an abstract statement. Let us emphasize that our lower bounds are in terms of an Agmon distance for $V_{M_0}$ rather than for $V$, and the former may be much greater than the latter (for example, the latter vanishes if $\max V<E_0$). See also \cite{servat} for one-dimensional resonance asymptotics using methods in some ways similar to ours, and \cite{dm} for a more recent higher-dimensional result and more references.

An interesting special case is the one where $V$ has a unique local minimum, located at the origin, and moreover this minumum is nondegenerate and $V_0(0) = E_0>0$. Then $M_0(E_0) = 0$, so that $V(x) = V_{M_0}(r)$ is a well-in-an-island type potential, and $E_0$ is at the bottom of the well. In that case, by \cite[Proposition 4.1]{n}, there is $\mu>0$ such that for any $U$ we have
\begin{equation}
\sup_{E \in [E_0-\mu h,E_0 + \mu h]}\|  {\bf 1}_U(P-E)^{-1} {\bf 1}_U  \|_{L^2(\mathbb R^n) \to L^2(\mathbb R^n)} \le C/h.
\end{equation}
This upper bound shows that the form of the interval $[E_0 - C_1h, E_0 + C_1h]$ in \eqref{e:texpasy} is optimal in general. See also \cite[\S1]{bbr} and \cite[\S 4]{ddz} for further discussion.

\section{Semiclassical ODE asymptotics}\label{s:ode}

In this section we prove pointwise resolvent estimates, for energies $E$ near $E_0$, for
\begin{equation}\label{e:radop}
P_m = P_m(h) =-h^2 \partial_r^2 + V_m(r),
\end{equation}
where $V_m$ is given by \eqref{e:vmdef} with $m\ge - h^2/4$. Let $\mathcal D = \mathcal D(m,h) \subset L^2(\mathbb R_+)$ be a domain for $P_m$ so that $P_m$ is selfadjoint and $C_c^\infty((0,\infty))  \subset \mathcal D$.

We briefly recall some facts about $\mathcal D$. By Proposition 2 and Theorems X.7, X.8, and X.10 of \cite[Appendix to X.I]{rs} we have $\mathcal D = \{u \in L^2(\mathbb R_+) \mid P_m u \in L^2(\mathbb R_+) \textrm{ and } \mathcal B u = 0\}$, where $\mathcal Bu$ is a boundary condition at $0$ with real coefficients, and moreover $\mathcal B = 0$ unless $m = O(h^2)$. If $m = O(h^2)$, then we may have $\mathcal B \ne 0$; below we will not need further information about $\mathcal B$, but  see \cite{rel, bg} and \cite[\S10.4]{zet} for descriptions of the possibilities. Note finally that $\mathcal D$ is preserved by complex conjugation because $P_m$ has real coefficients.

The outgoing resolvent kernel at energy $E>0$ is given by
\begin{equation}\label{e:kdef}
 K(r,r')  = K(r,r';m;E;h) = -u_0(r)u_1(r')/h^2W, \qquad \textrm{for } r \le r',
\end{equation}
and it obeys $K(r,r') = K(r',r)$, where $u_0$ and $u_1$ are certain solutions  to 
\begin{equation}\label{e:pjuj}
P_mu_j = E u_j,
\end{equation}
and
$W = u_0u_1' - u_0'u_1$ is their Wronskian. More specifically $u_0 \in L^2((0,1))$  and satisfies $\mathcal Bu_0 = 0$,  and $u_1(r)$ is outgoing, that is to say it is asymptotic to a multiple of $e^{i r \sqrt E/h}$ as $r \to \infty$. For convenience we assume without loss of generality that $u_0$ is real-valued.

In the remainder of \S\ref{s:ode} we prove three lemmas, each of which bounds $K(r,r')$  for a different range of $r$, $r'$, $m$, and $E$. The first two will be used to prove \eqref{e:tnontr}, and the third to prove \eqref{e:texpasy} and \eqref{e:texpasyseq}. 

In the first lemma $m$ is small enough that no turning point analysis is needed.

\begin{lem}\label{l:noturn}
Fix $r_{2+}>r_2$, $M>0$, and $I \Subset(0,\infty)$ such that $V_M(r)<E$ for all $r \ge r_{2+}$ and $E \in I$. Then
\begin{equation}\label{e:krr'bigbetter}
 |K(r,r')| \le \frac{h^{-1}(1 + O(h))}{(E-V_m(r))^{1/4}(E-V_m(r'))^{1/4}},
\end{equation}
uniformly for all $r \ge r_{2+}$, $r' \ge r_{2+}$, $E \in I$, and $m \in [0,M]$.
\end{lem}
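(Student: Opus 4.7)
The strategy is to apply Olver's rigorous WKB (Liouville--Green) approximation on the semi-infinite interval $[r_{2+},\infty)$, which contains no turning points of $P_m - E$: since $m \in [0,M]$, we have $V_m(r) \le V_M(r) < E$ uniformly there. The plan is to construct two WKB approximants $\phi_\pm$, express $u_0$ and $u_1$ in the basis $\{\phi_+,\phi_-\}$, compute the Wronskian, and read off \eqref{e:krr'bigbetter}.

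For $r \ge r_{2+}$, set $p(r) = (E-V_m(r))^{1/2}$ and $S(r) = \int_{r_{2+}}^r p(s)\,ds$. Olver's construction \cite{Olver:Asymptotics} produces two exact solutions of $P_m u = E u$ of the form
\begin{equation*}
\phi_\pm(r) = p(r)^{-1/2}\, e^{\pm i S(r)/h}\bigl(1 + \eta_\pm(r;h)\bigr),
\end{equation*}
where $\eta_\pm$, together with a suitably weighted derivative, is controlled by a tail integral on $[r,\infty)$ of $h\,p^{-1/2}(p^{-1/2})''$. Using the assumptions $|r^{2+k}\partial_r^k V_0| \le C$ for $k \le 3$ and $0 \le m \le M$, this integrand is $O(h\,r^{-4})$ at infinity and bounded on compact subsets of $[r_{2+},\infty)$, so the error control function is $O(h)$ uniformly in $r \ge r_{2+}$, $m \in [0,M]$, and $E \in I$; hence $\eta_\pm = O(h)$ in the same uniform sense.

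Since $V_m(r) \to 0$ as $r\to\infty$, one has $S(r) = r\sqrt E + O(1)$, so $\phi_+(r)$ is asymptotic to a nonzero multiple of $e^{ir\sqrt E/h}$ and therefore agrees with $u_1$ up to a nonzero constant, say $u_1 = \alpha\,\phi_+$. Since $u_0$ is real and $\phi_- = \overline{\phi_+}$, we may write $u_0 = \beta\,\phi_+ + \bar\beta\,\phi_-$ for some $\beta \ne 0$. Differentiating the leading WKB ansatz directly shows $W(\phi_+,\phi_-) = -2i/h\,(1+O(h))$, whence
\begin{equation*}
W = W(u_0,u_1) = \bar\beta\,\alpha\, W(\phi_-,\phi_+) = \frac{2i\,\bar\beta\,\alpha}{h}\bigl(1+O(h)\bigr).
\end{equation*}
The bounds $|u_0(r)| \le 2|\beta|\,p(r)^{-1/2}(1+O(h))$ and $|u_1(r')| \le |\alpha|\,p(r')^{-1/2}(1+O(h))$ then combine with \eqref{e:kdef} to yield \eqref{e:krr'bigbetter}: the unknown amplitudes $|\alpha|$ and $|\beta|$ cancel, and the factor of $2$ arising from $|u_0|$ is absorbed by $|W|$.

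The main obstacle is establishing the $O(h)$ bound on $\eta_\pm$ uniformly in all three parameters $(r,m,E)$. The key inputs are the uniform positivity $E - V_m \ge E - V_M \ge \delta > 0$ on $[r_{2+},\infty)$ (immediate from $m \le M$ and the hypothesis $V_M<E$ there) together with the uniform integrability at infinity of the error control function, which holds because $|V_m''| \le C/r^4$ (from the decay of $V_0''$ and of $(m/r^2)''$) while $p$ remains uniformly bounded away from $0$ and $\infty$. Both are guaranteed by the standing hypotheses on $V_0$ together with $m \le M$.
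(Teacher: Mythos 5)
Your proposal is correct and follows essentially the same route as the paper: Liouville--Green approximants on $[r_{2+},\infty)$ (where $E-V_m\ge E-V_M>0$ rules out turning points), the real solution $u_0$ written as a conjugate pair of oscillatory terms, the outgoing normalization of $u_1$, and a Wronskian of size $2|\alpha||\beta|h^{-1}(1+O(h))$ that exactly cancels the amplitudes in $|u_0(r)u_1(r')|$. The paper phrases the coefficients as $A\pm iB$ with $A,B$ real rather than $\beta,\bar\beta$, and quotes the error bound $|\varepsilon_\pm|+h|\varepsilon_\pm'|\le Chr^{-1}$ from Olver \S 6.2.4, but these are cosmetic differences.
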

In \S\ref{s:proof2} we will specify $r_{2+}$ and $M$; they will be slightly larger than $r_2$ and $M_0$ respectively.

Before giving the proof we give the idea. We will use the fact that $u_0$ and $u_1$ are each oscillatory, rather than exponentially growing or decaying, since we are in the classically allowed region $E > V_m$. So the upper bound follows from a lower bound on the Wronskian; this in turn follows from the fact that, roughly speaking, $u_1$ is like $\exp\left(\frac i h \int \sqrt{E-V_m}\right)$ since it is outgoing, while $u_0$ is equal amounts $\exp\left(\frac i h \int \sqrt{E-V_m}\right)$ and $\exp\left(-\frac i h \int \sqrt{E-V_m}\right)$  since it is real-valued.

\begin{proof}
For any $u_0$ as above, by \cite[\S 6.2.4]{Olver:Asymptotics} there are real constants $A = A (h)$ and $B = B(h)$ such that for $r \ge r_{2+}$ we have
\begin{equation}\label{e:u0lg}
  u_0(r) = (E-V_m(r))^{-1/4}\left(\sum_{\pm} (A \pm i B) \exp\left(\pm\frac i h \int_{r_{2+}}^r \sqrt{E-V_m(r')}dr'\right)(1+\varepsilon_\pm(r))\right),
\end{equation}
where $\varepsilon_+$ and $\varepsilon_-$ satisfy
\begin{equation}\label{e:epspm}
 |\varepsilon_\pm(r)| +  h|\varepsilon_\pm'(r)|   \le  C h r^{-1},
\end{equation}
when $r \ge r_{2+}$.

Again by \cite[\S 6.2.4]{Olver:Asymptotics}, we can normalize $u_1$ to be the outgoing  solution of \eqref{e:pjuj} given by
\begin{equation}\label{e:u1lg}
  u_1(r) = (E-V_m(r))^{-1/4}\exp\left(\frac i h \int_{r_{2+}}^r \sqrt{E-V_m(r')}dr'\right)(1+\varepsilon_+(r)),
\end{equation}
for $r \ge r_{2+}$.

We  compute the Wronskian 
\begin{equation}\label{e:wronu0u1}
 W = W(u_0,u_1) = \frac{A-iB}{\sqrt{E-V_m(r)}} \cdot \frac {2i}h \sqrt{E-V_m(r)} (1+O(hr^{-1})) = 2(B+iA)h^{-1},
\end{equation}
where we dropped the remainder because $W$ is independent of $r$.
Combining this with \eqref{e:kdef}, \eqref{e:u0lg}, and \eqref{e:u1lg} gives \eqref{e:krr'bigbetter}.
\end{proof}

In the second lemma $m$ is large enough that a turning point analysis is needed.
 For our purposes the following bound which blows up near the turning point is sufficient, even though $K$ is of course continuous there. We state a result for all $r>0$ and $r'>0$, even though we only use a smaller range in our application, since the result for the full range is obtained with no extra effort. We remark that a closely related turning point analysis appears in a recent paper of Yafaev on semiclassical asymptotics for eigenfunctions in a potential well \cite{y}.

\begin{lem}\label{l:oneturn}
Fix  $M>0$ such that $V'_M(r)<0$ for all $r \ge r_2$, and fix $I\Subset (0,\infty)$ containing $E_0$ such that $V_M(r)>E$ for all $r<r_2$ and $E \in I$. Then
\begin{equation}\label{e:kturn}
|K(r,r')| \le \frac{C_A \pi h^{-1}(1 + O(m^{-1/2}h))} {|E-V_m(r)|^{1/4}|E-V_m(r')|^{1/4}},
\end{equation}
uniformly for all $r>0$, $r'>0$, $m \ge M$ and $E \in I$, where $C_A$ is given by \eqref{e:abbound}.
\end{lem}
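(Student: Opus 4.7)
The strategy is to apply Olver's uniform Airy-function approximation \cite{Olver:Asymptotics} near the simple turning point $r_T = r_T(m,E)$ of $V_m$ at energy $E$. Under the hypotheses, $V_m'(r) = V_M'(r) - 2(m-M)/r^3 < 0$ on $[r_2, \infty)$, $V_m(r_2) > E$, and $V_m(\infty) = 0$, so there is a unique $r_T > r_2$ with $V_m(r_T) = E$; moreover $V_m > E$ on $(0, r_T)$ and $V_m < E$ on $(r_T, \infty)$. Define the Liouville variable $\xi = \xi_m(r, E)$ by $\tfrac{2}{3}|\xi|^{3/2} = \left|\int_{r_T}^r\sqrt{|V_m(s) - E|}\,ds\right|$ with $\xi > 0$ on the forbidden side and $\xi < 0$ on the allowed side, and set $\hat\xi = h^{-2/3}\xi$. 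Olver's theorem yields two linearly independent solutions of $P_m u = Eu$,
\[
W_j(r) = \left(\frac{\xi}{V_m - E}\right)^{1/4}\!\bigl[A_j(\hat\xi)(1+\epsilon_{j,1}(r)) + h\,A_j'(\hat\xi)\,\epsilon_{j,2}(r)\bigr], \qquad j = 1, 2,
\]
with $A_1 = \Ai$, $A_2 = \Bi$, and error terms satisfying $|\epsilon_{j,k}(r)| = O(m^{-1/2}h)$ uniformly in $r$, $m$, and $E$.

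As $r \to 0^+$, $\xi \to +\infty$, so $\Ai(\hat\xi)$ decays exponentially while $\Bi(\hat\xi)$ grows; the $L^2$ condition at $0$ (with $\mathcal B = 0$ for $m \ge M$ and $h$ small) then forces $u_0 = c_0 W_1$ with $c_0$ real. In the allowed region, the standard asymptotics $\Ai(\hat\xi) - i\Bi(\hat\xi) \sim \pi^{-1/2}|\hat\xi|^{-1/4}e^{i(\phi(r)/h - \pi/4)}$, where $\phi(r) = \int_{r_T}^r\sqrt{E - V_m(s)}\,ds$, match the outgoing condition $u_1 \sim e^{i\sqrt{E}\, r/h}$ at infinity, so $u_1 = c_1(W_1 - iW_2)$. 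Using $W(\Ai, \Bi) = 1/\pi$ and the identity $\xi(d\xi/dr)^2 = V_m - E$, a direct calculation gives $W(W_1, W_2) = h^{-2/3}/\pi \cdot (1 + O(m^{-1/2}h))$, and hence $W(u_0, u_1) = -ic_0 c_1 h^{-2/3}\pi^{-1}(1 + O(m^{-1/2}h))$.

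The symmetry $K(r,r') = K(r', r)$ lets us assume $r \le r'$; then
\[
|K(r,r')| = \frac{|u_0(r)||u_1(r')|}{h^2|W|} = \pi h^{-4/3}|W_1(r)|\sqrt{W_1(r')^2 + W_2(r')^2}\,(1 + O(m^{-1/2}h)).
\]
Inserting the Olver expansion and applying the Airy bounds of \eqref{e:abbound} gives $|\xi|^{1/4}|\Ai(\hat\xi)| \le C_A^{1/2} h^{1/6} e^{-\frac{2}{3}\max(\hat\xi,0)^{3/2}}$ and $|\xi|^{1/4}\sqrt{\Ai(\hat\xi)^2 + \Bi(\hat\xi)^2} \le C_A^{1/2} h^{1/6} e^{\frac{2}{3}\max(\hat\xi, 0)^{3/2}}$. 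Since $r \le r'$ forces $\hat\xi(r) \ge \hat\xi(r')$, the exponentials at $r$ and $r'$ combine to an expression bounded by $1$, so multiplying out and extracting the prefactors $|V_m - E|^{-1/4}$ yields \eqref{e:kturn}.

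The main technical obstacle is proving Olver's error bounds uniformly in $m \ge M$: this is handled by rescaling $s = r/\sqrt m$, which places the turning point at an $O(1)$ location and turns the effective semiclassical parameter into $\tilde h = h/\sqrt m$, so that Olver's standard $O(\tilde h)$ error becomes $O(m^{-1/2}h)$; the uniform integrability of the control function in Olver's iteration scheme then follows from the hypothesis $|r^{2+k}\partial_r^k V_0(r)| \le C$ for $k \in \{0, 1, 2, 3\}$, together with the fact that, in the rescaled variable $s$, $V_m$ converges uniformly on compact subsets of $(0, \infty)$ to $1/s^2$ as $m \to \infty$.
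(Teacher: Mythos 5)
Your approach is essentially the same as the paper's: a Liouville--Green transformation and Olver's uniform Airy approximation near the turning point, identification of $u_0$ as the recessive ($\Ai$-type) solution via the $L^2$ condition at $0$, identification of $u_1$ as the outgoing $\Ai - i\Bi$ combination, a Wronskian computation, and an application of the Airy bound \eqref{e:abbound}. The Wronskian calculation and the bookkeeping for powers of $h$ (the $h^{-4/3}\cdot h^{1/3} = h^{-1}$ count) are correct.

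There is, however, a real gap in the handling of the origin. You build the Liouville variable directly from $V_m - E$, but the lemma is claimed uniformly for all $r>0$, and near $r=0$ the effective potential behaves like $m/r^2$. Without the Langer shift, Olver's error-control variation diverges as $r\to 0^+$: with $g=0$ in the decomposition \eqref{e:fgdef} and $f(r)=Fr^{-2}(1+O(m^{-1/2}r))$ near the origin, the combination $5f^{-2}(f')^2 - 4f^{-1}f''$ equals $(20-24)r^{-2}+O(m^{-1/2}r^{-1})=-4r^{-2}+\ldots$; after multiplying by $|f|^{-1/2}\sim r/\sqrt F$ one gets $|G(r)|\sim r^{-1}$, which is not integrable at $r=0$. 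The paper's choice $m' = m + h^2/4$ together with $g=-\tfrac14 r^{-2}$ is made precisely so that this $-4r^{-2}$ cancels against $-16g$ in \eqref{e:gabs}, which is what makes the relative error bound \eqref{e:epsab-} hold uniformly on all of $(0,R]$. That uniformity is what rigorously justifies both the identification $u_0 \propto W_1$ (no $\Bi$-component), and the validity of \eqref{e:kturn} all the way down to $r=0$. Your rescaling $s=r/\sqrt m$ correctly explains the effective parameter $m^{-1/2}h$ and the $O(1)$ turning point location, but the uniform convergence of the rescaled potential to $1/s^2$ on compacta does not by itself repair the singular behavior at $s=0$; the paper's Appendix \ref{s:apperror} has to treat the region near the origin, the turning-point region, and the region near infinity separately to get the uniform $L^1$ bound on the control function $G$ over the whole half-line. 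Finally, your Olver expansion is written in a modified form with a multiplicative $(1+\epsilon_{j,1})$ and a separate $hA_j'\epsilon_{j,2}$ term; Olver's Chapter 11, Theorem 3.1 error terms are additive and the bounds you'd get are in terms of the Airy envelope/modulus functions (which is what \eqref{e:epsab+}--\eqref{e:epsab-} encode), so you should either cite the correct additive form or indicate how to derive your form from it.
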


Before giving the proof we give the idea. By rescaling, we can use $m^{-1/2}h$ as a new semiclassical parameter, and the turning point $R$ is roughly given by $V_m^{-1}(E) \ge r_2$; the classically forbidden region is $r < R$ and classically allowed region is $r>R$. In the classically allowed region the bound holds for the same reason that it did in Lemma \ref{l:noturn}. In the classically forbidden region, the solutions are exponentially growing and decaying, rather than oscillatory, because $E<V_m$ there. But $u_0$ is forced to have only an exponentially decaying component and no exponentially growing one by the condition $u_0 \in L^2((0,1))$. Since we are estimating an expression of the form $u_0(r)u_1(r')$ with $r \le r'$, the decay from $u_0$  beats the growth from $u_1$. The Wronskian cannot be very small for the same reason as in Lemma \ref{l:noturn}. Near the turning point these arguments break down, as can be seen from the weakness of \eqref{e:kturn} when $r$ or $r'$ is close to $V_m^{-1}(E)$.

\begin{proof}  

The proof proceeds in four steps. In the first we introduce some useful notation, including a change of variable $r \mapsto \zeta(r)$ in the manner of \cite[\S11.3]{Olver:Asymptotics}. In the second we express $u_0$ in terms of Airy functions, and compute asymptotics as $m^{1/2}h^{-1}$ and $r$ become large. In the third we do the same for $u_1$, and in the fourth we compute the Wronskian and combine the previous results to conclude.

Put 
\begin{equation}
m' = m + \tfrac {h^2}4 \quad \textrm{ and }\quad  R = V_{m'}^{-1}(E),
\end{equation}
and note that 
\begin{equation}\label{e:sqrtm}
R + |V'_{m'}(R)|^{-1} \le C m^{1/2};
\end{equation}
see Figure \ref{f:v2}.

\begin{figure}[h]
\labellist
\small
\pinlabel $E$ [l] at 7 75
\pinlabel $r_2$ [l] at 211 14
\pinlabel $R$ [l] at 257 16
\endlabellist
 \includegraphics[width=10cm]{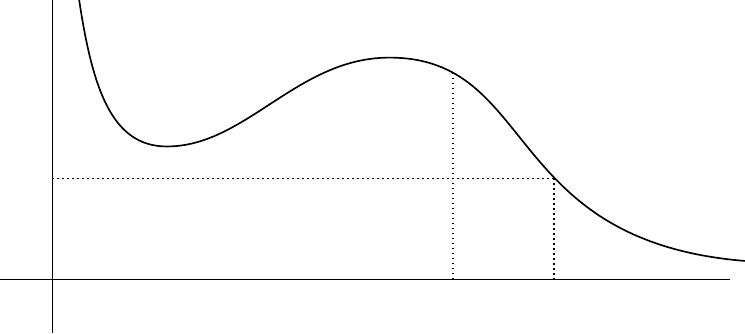}
 \caption{A possible graph of $V_{m'}$.}\label{f:v2}
\end{figure}

Following \cite[\S11.3.1]{Olver:Asymptotics}, we rewrite \eqref{e:pjuj} as
\begin{equation}\label{e:fgdef}
 u'' = \left((m^{1/2}h^{-1})^2 f + g\right)u, \qquad \textrm{where } \quad  f = m^{-1}(V_{m'} - E)\quad  \textrm{ and } \quad g = -\tfrac 14 r^{-2}.
\end{equation}
As we will see in \eqref{e:epsab-} below, this decomposition leads to good asymptotic properties as $r \to 0$.

Define an increasing bijection  $(0,\infty) \ni r \mapsto \zeta(r) = \zeta (r;m;h) \in \mathbb R$ by 
\begin{equation}\label{e:zetadef}
 \zeta(r) = \pm\left|\frac 3{2h} \int_R^r \sqrt{E-V_{m'}(r')}dr'\right|^{2/3},\quad  \textrm{ when } \pm(r-R)\ge0.
\end{equation}
Note that our $\zeta$ differs from the one used in \cite[\S11.3]{Olver:Asymptotics} by a factor of $m^{1/3}h^{-2/3}$.

We will need the following bound for $\zeta$: for $r>0$ sufficiently small we have
\begin{equation}\label{e:zetarsmall}
\frac 23 m^{-1/2}h(- \zeta(r))^{3/2} = m^{-1/2}\int_r^R\sqrt{V_{m'}(r') - E}dr' \ge \ln(1/r)/C.
\end{equation}

 By \cite[\S 11.3.3]{Olver:Asymptotics}, there are constants $A_0 = A_0(h)$ and $B_0 = B_0(h)$ such that
\begin{equation}\label{e:u0airy}
 u_0(r) = \left(\frac{\zeta(r)}{E-V_{m'}(r)}\right)^{1/4}\left( A_0\left(\Ai(-\zeta(r)) + \varepsilon_A(r)\right) + B_0 \left(\Bi(-\zeta(r)) + \varepsilon_B(r)\right)\right),
\end{equation}
where $\Ai$ and $\Bi$ are given by \eqref{e:airydef}, and  $\varepsilon_A$ and $\varepsilon_B$ satisfy
\begin{equation}\label{e:epsab+}
|\varepsilon_A(r)| + |\varepsilon_B(r)| \le  C  m^{-1/2} h \langle \zeta (r) \rangle^{-1/4},  \quad \textrm{ when } r \ge R,
\end{equation}
and
\begin{equation}\label{e:epsab-}
\frac{|\varepsilon_A(r)|}{\Ai(-\zeta(r))} + \frac{|\varepsilon_B(r)|}{\Bi(-\zeta(r))}   \le  C m^{-1/2} h \langle \zeta (r) \rangle^{-1/4} ,   \quad \textrm{ when } r \le R;
\end{equation}
see Appendix \ref{s:apperror} for more details. We use $m'$ rather than $m$ in our definition of $\zeta$ because this choice makes \eqref{e:epsab-} hold uniformly for all $r \in (0,R]$, rather than merely uniformly on compact subintervals of $(0,R]$: see Appendix \ref{s:apperror} and also \cite[\S11.4.1 and \S 6.4.3]{Olver:Asymptotics}.

Recalling that $u_0 \in L^2((0,1))$, and inserting \eqref{e:epsab-}, \eqref{e:ax}, and \eqref{e:bx} into \eqref{e:u0airy}, we see that $B_0=0$. Without loss of generality we  normalize $u_0$ so that $A_0 = 1$ and
\begin{equation}\label{e:u0ai}
  u_0(r) = \left(\frac{\zeta(r)}{E-V_{m'}(r)}\right)^{1/4}\left( \Ai(-\zeta(r)) + \varepsilon_A(r)\right).
\end{equation}

We now derive the simpler and better asymptotics which hold for large $r$; these will ease the computation of the Wronskian later. If $r\ge R+1$, then inserting \eqref{e:epsab+} and \eqref{e:a-x} into  \eqref{e:u0ai}  gives
\begin{equation}\label{e:u0cos}
 u_0(r) =  \pi^{-1/2} (E - V_{m'}(r))^{-1/4} \left(\cos\left(\frac 23 \zeta(r)^{3/2}- \frac \pi 4\right) + O(m^{-1/2}h) + O(\zeta(r)^{-3/2})\right).
\end{equation}
On the other hand, as in \eqref{e:u0lg}, there are real constants $A = A(h)$ and $B=B(h)$ such that for $r \ge R+1$ we have 
\begin{equation}\label{e:u0lgm}
  u_0(r) = (E-V_{m'}(r))^{-1/4}\left(\sum_{\pm} (A \pm i B) \exp\left(\pm\frac i h \int_{R}^r \sqrt{E-V_{m'}(r')}dr'\right)(1+\varepsilon_\pm(r))\right),
\end{equation}
where $\varepsilon_+$ and $\varepsilon_-$ satisfy
\begin{equation}\label{e:epspmm}
 |\varepsilon_\pm(r)| + m^{-1/2}h|\varepsilon_\pm'(r)|   \le  Cm^{-1/2} h r^{-1},
\end{equation}
when $r \ge R+1$. Setting \eqref{e:u0cos} equal to \eqref{e:u0lgm} and applying \eqref{e:epspmm} gives
\begin{equation}\label{e:u0connect}
 \pi^{-1/2} \cos\left(\frac 1 h\varphi(r) - \frac \pi 4\right) + O(m^{-1/2}h) = \sum_{\pm} (A \pm i B) \exp\left(\pm\frac i h \varphi(r)\right)(1+O(m^{-1/2}h)),
\end{equation}
for $r$ large enough (depending on $m$ and $h$), where we used
\[\varphi(r) =  \frac {2h} 3 \zeta(r)^{3/2} = \int_{R}^r \sqrt{E-V_{m'}(r')}dr' \to \infty \textrm{ as } r \to \infty.\] 
Hence
\begin{equation}\label{e:apmib}
 A \pm i B = 2^{-1}\pi^{-1/2}e^{\mp i\pi/4} + O(m^{-1/2}h).
\end{equation}

Now we turn to $u_1$. As in \eqref{e:u1lg}, we normalize it by setting
\begin{equation}\label{e:u1lgm}
  u_1(r) = (E-V_{m'}(r))^{-1/4}\exp\left(\frac i h \int_{R}^r \sqrt{E-V_{m'}(r')}dr'\right)(1+\varepsilon_+(r)),
\end{equation}
for $r \ge R+1$.
Using \cite[\S 11.3.3]{Olver:Asymptotics} again, there are constants $A_1 = A_1(h)$ and $B_1 = B_1(h)$ such that for $r \in (0,\infty)$ we have
\begin{equation}\label{e:u1airy}
  u_1(r) = \left(\frac{\zeta(r)}{E-V_{m'}(r)}\right)^{1/4}\left( A_1\left(\Ai(-\zeta(r)) + \varepsilon_A(r)\right) + B_1 \left(\Bi(-\zeta(r)) + \varepsilon_B(r)\right)\right).
\end{equation}
Proceeding as in the proof of \eqref{e:u0connect} gives
\[\begin{split}
 A_1\left(\cos\left(\frac 1 h \varphi(r)- \frac \pi 4\right) + O(m^{-1/2}h)\right) - B_1 \left(\sin\left(\frac 1 h \varphi(r)- \frac \pi 4\right) +  O(m^{-1/2}h)\right) \\ = \sqrt \pi\exp\left(\frac i h \varphi(r)\right)(1+O(m^{-1/2}hr^{-1})).
\end{split}\]
Hence we have
\begin{equation}\label{e:a1b1}
 A_1 = \sqrt{\pi}e^{i\pi/4} + O(m^{-1/2}h), \qquad B_1 = \sqrt{\pi}e^{-i\pi/4} + O(m^{-1/2}h),
\end{equation}
which implies
\begin{equation}\label{e:u1abs}
|u_1(r)|^2 = \pi \left|\frac{\zeta(r)}{E-V_{m'}(r)}\right|^{1/2}\left(\Ai(-\zeta(r))^2 + \Bi(-\zeta(r))^2\right)(1+O(m^{-1/2}h)).
\end{equation}
We compute the Wronskian $W = u_0u_1'-u_0'u_1$ as in the proof of \eqref{e:wronu0u1}, and apply \eqref{e:apmib}:
\begin{equation}\label{e:wturn}
 W =2(B+iA)h^{-1} =  \pi^{-1/2}e^{3i\pi/4}h^{-1}(1 + O(m^{-1/2}h)).
\end{equation}

Now \eqref{e:kturn} follows from inserting \eqref{e:u0ai}, \eqref{e:u1abs},  and  \eqref{e:wturn} into \eqref{e:kdef}, applying \eqref{e:epsab+}, \eqref{e:epsab-},  and \eqref{e:abbound}, and observing that $m' = m +h^2/4$ allows us to replace $V_{m'}$ by $V_m$ in the final statement (actually, keeping $V_{m'}$  gives a slightly better bound).
\end{proof}

In the third lemma the analysis is similar to that in the second, but slightly easier since we consider only a bounded range of $m$. To obtain good lower bounds we consider only a particular energy level, rather than an interval of energies as in the previous two lemmas. We actually obtain an asymptotic, rather than merely a lower bound, with no extra effort.

Having information about $K$ in \eqref{e:krr'midmid}, rather than just $|K|$, is important for our application to wave non-decay in Theorem \ref{t:wave}, where we need a lower bound on $|\im K|$.

\begin{lem}\label{l:lower}
Fix $r_{1+} < r_{2-} \in (r_1,r_2)$ and fix $r_{2+}>r_2$. For any $m = m(h) = M_0 + O(h)$, there is an energy level $E = E(h) = E_0 + O(h)$,  such that
\begin{equation}\label{e:krr'mid}
 |K(r,r')| =  \frac{e^{S(r)  /h}h^{-1}(1+O(h^{1/3}))}{2(V_m(r)-E)^{1/4}(E-V_m(r'))^{1/4}},  \textrm{ when } r \in [r_{1+}, r_{2-}], \ r' \ge r_{2+},
\end{equation}
and
\begin{equation}\label{e:krr'midmid}
 K(r,r') =  \frac{- e^{-i\pi/6}e^{S(r)  /h}e^{S(r')  /h}h^{-1}(1+O(h^{1/3}))}{2(V_m(r)-E)^{1/4}(V_m(r')-E)^{1/4}}, \textrm{ when } r, \ r' \in [r_{1+}, r_{2-}]
\end{equation}
 where
\begin{equation}
 S(r) = \int_r^{R}\sqrt{V_m(r') - E}dr',
\end{equation}
with $R = \max V_m^{-1}(E) = r_2 + O(h)$. See Figure \ref{f:v}.

Moreover, it suffices to take $E$ to be an eigenvalue of $P_m$ as an operator on $L^2((0,r_2))$ with domain \begin{equation}\label{e:dr2def}
\mathcal D_{r_2} = \{u \in L^2((0,r_2)) \mid P_m u \in  L^2((0,r_2)) \textrm{ and } \mathcal B u = u(r_2) = 0\}.                                                                                                        \end{equation} 
\end{lem}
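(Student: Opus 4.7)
The strategy is to take $E=E(h)$ to be an interior eigenvalue of $(P_m,\mathcal D_{r_2})$; then the corresponding eigenfunction agrees with the $u_0$ appearing in \eqref{e:kdef} up to an irrelevant overall scalar, but now carries the extra information $u_0(r_2)=0$. Inserting that zero into the Airy representation of $u_0$ near the outer turning point fixes the ratio of Airy coefficients to $-\Ai(0)/\Bi(0)=-1/\sqrt 3$ up to errors of order $h^{1/3}$, which forces the exponentially growing mode $e^{S(r)/h}$ to dominate throughout the barrier. Combining this with the outgoing WKB/Airy form of $u_1$ from the proof of Lemma \ref{l:oneturn} and computing the Wronskian explicitly yields \eqref{e:krr'mid} and \eqref{e:krr'midmid}.

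For the eigenvalue: $(P_m,\mathcal D_{r_2})$ is self-adjoint on $L^2((0,r_2))$ with discrete spectrum, and by the assumptions on $V$ at the start of \S\ref{s:main} there is a nondegenerate open subinterval of $(0,r_1)$ on which $V_{M_0}<E_0$. Constructing a harmonic-oscillator-type quasimode $\psi$ supported in this subinterval (hence well away from both $0$ and $r_2$) with $\|\psi\|_{L^2}=1$ and $\|(P_m-E_0)\psi\|_{L^2}=O(h^\infty)$, and applying the spectral theorem, produces an eigenvalue $E(h)=E_0+O(h)$. Fix such an $E$ and let $u_0$ be a real eigenfunction, which is a nontrivial solution of $(P_m-E)u_0=0$ satisfying $\mathcal Bu_0=0$ at $r=0$ and $u_0(r_2)=0$.

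Now I carry out the Airy analysis of Lemma \ref{l:oneturn}, but centered at the exact outer turning point $R=\max V_{m'}^{-1}(E)$ for this $E$; note $r_2-R=O(h)$ and hence $\zeta(r_2)=O(h^{1/3})$. Writing $u_0$ as in \eqref{e:u0airy} with unknown coefficients $A_0,B_0$, the condition $u_0(r_2)=0$ together with \eqref{e:epsab+} gives $B_0/A_0=-\Ai(-\zeta(r_2))/\Bi(-\zeta(r_2))+O(h)=-1/\sqrt 3+O(h^{1/3})$. For $r\in[r_{1+},r_{2-}]$ the argument $-\zeta(r)\sim h^{-2/3}$ is large, the standard Airy asymptotic applies, the $\Ai$-contribution is exponentially subdominant to the $\Bi$-contribution (using the ratio bound \eqref{e:epsab-}), and $(2/3)(-\zeta(r))^{3/2}=S(r)/h$ yields
\begin{equation*}
u_0(r)=\tfrac{B_0}{\sqrt\pi}(V_m(r)-E)^{-1/4}e^{S(r)/h}(1+O(h^{1/3})).
\end{equation*}
Repeating this analysis for $u_1$ gives either the analogous formula in the barrier (with $B_1$ in place of $B_0$) or the oscillatory form \eqref{e:u1lgm} for $r'\ge r_{2+}$; the coefficients $A_1=\sqrt\pi e^{i\pi/4}$, $B_1=\sqrt\pi e^{-i\pi/4}$ from \eqref{e:a1b1} remain in force since $u_1$ is pinned by its outgoing behavior at infinity and carries no condition at $r_2$.

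To finish, compute the Wronskian by evaluating at any convenient point via the Airy representations: using $\Ai\,\Bi'-\Ai'\,\Bi=1/\pi$ and $(\zeta/(E-V_m))^{1/2}\zeta'(r)=1/h$, one gets $W=-(A_0B_1-A_1B_0)/(\pi h)(1+O(h^{1/3}))$. Substituting $B_0/A_0=-1/\sqrt 3$ and the values of $A_1,B_1$ above and simplifying via
\begin{equation*}
 e^{-i\pi/4}+\tfrac{1}{\sqrt 3}e^{i\pi/4}=\tfrac{2}{\sqrt 3}e^{-i\pi/12},
\end{equation*}
then plugging into $K(r,r')=-u_0(r)u_1(r')/(h^2W)$, the overall scalar $A_0$ cancels and one reads off \eqref{e:krr'mid} and \eqref{e:krr'midmid}; the phase $e^{-i\pi/6}=e^{-i\pi/4}e^{i\pi/12}$ in \eqref{e:krr'midmid} emerges as the product of the outgoing-$u_1$ phase $e^{-i\pi/4}$ with the Wronskian phase $e^{i\pi/12}$. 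The main obstacle is bookkeeping: one must track the Olver errors \eqref{e:epsab+}--\eqref{e:epsab-} through the ratio $B_0/A_0$, through the $\Bi$-asymptotic, and through $W$, and verify that the dominant $O(h^{1/3})$ remainder really traces back to $\zeta(r_2)=O(h^{1/3})$, i.e. to the shift $R-r_2=O(h)$ of the turning point caused by imposing the Dirichlet condition at the reference radius $r_2$ rather than at the exact turning point $R$.
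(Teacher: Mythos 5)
Your overall architecture — take $E$ to be a Dirichlet eigenvalue of $(P_m,\mathcal D_{r_2})$, use $u_0(r_2)=0$ together with $\zeta(r_2)=O(h^{1/3})$ and $\Bi(0)=\sqrt3\,\Ai(0)$ to pin $B_0/A_0=-1/\sqrt3+O(h^{1/3})$, keep the outgoing normalization \eqref{e:a1b1} for $u_1$, and read off \eqref{e:krr'mid}--\eqref{e:krr'midmid} from the Wronskian — is exactly the paper's, and your phase bookkeeping checks out: $A_0B_1-A_1B_0=A_0\sqrt\pi\,\tfrac{2}{\sqrt3}e^{-i\pi/12}$ reproduces $W=\pi^{-1/2}e^{11i\pi/12}h^{-1}(1+O(h^{1/3}))$ and the prefactor $-e^{-i\pi/6}/2$.

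There is, however, a genuine error in your existence argument for the eigenvalue. You claim that the hypotheses furnish a nondegenerate open subinterval of $(0,r_1)$ on which $V_{M_0}<E_0$, and you build an $O(h^\infty)$ quasimode there. No such subinterval exists: by the definition \eqref{e:m0def} of $M_0$ one has $E_0=\min\{V_{M_0}(r)\mid r\in(0,r_2)\}$ (if $V_{M_0}(r_0)<E_0$ for some $r_0<r_2$, then for $m$ slightly larger than $M_0$ the function $V_m$ would still dip below $E_0$ near $r_0$, exceed $E_0$ at $r_2$, and tend below $E_0$ as $r\to\infty$, so $V_m^{-1}(E_0)$ would have two points, contradicting the maximality of $M_0$). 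Thus the well of $V_{M_0}$ only \emph{touches} $E_0$ at the degenerate minimum $r_1$, there is no classically allowed region at energy $E_0$ inside $(0,r_2)$, and no $O(h^\infty)$ quasimode at energy $E_0$ is available — the best one can do is a Gaussian $e^{-\alpha(r-r_1)^2/2h}\chi(r)$ centered at $r_1$, whose Rayleigh quotient is $E_0+O(h)$; combined with the trivial lower bound $E\ge\min V_m\ge E_0-Ch$ this yields the eigenvalue $E=E_0+O(h)$ (this is the paper's \eqref{e:vmalpha}--\eqref{e:specbd1}, and it is consistent with the fact that the lemma only asserts $E=E_0+O(h)$, not better). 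The conclusion you need is therefore true, but your route to it fails; replace it with the Gaussian trial function at $r_1$. A secondary caution: computing $W$ by differentiating the Airy representations \eqref{e:u0airy} and \eqref{e:u1airy} directly requires bounds on $\varepsilon_A'$ and $\varepsilon_B'$ (which Olver's theorem does supply); the paper instead evaluates $W$ in the oscillatory region via \eqref{e:u0lgm}--\eqref{e:u1lgm} and the connection formulas \eqref{e:apmib2}, which avoids this point.
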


\begin{figure}[h]
\labellist
\small
\pinlabel $E_0$ [l] at 5 75
\pinlabel $E$ [l] at 6 95
\pinlabel $r_1$ [l] at 72 16
\pinlabel $r_{1+}$ [l] at 125 16
\pinlabel $r_{2-}$ [l] at 210 16
\pinlabel $R$ [l] at 238 17
\pinlabel $r_2$ [l] at 261 16
\pinlabel $r_{2+}$ [l] at 281 16
\endlabellist
 \includegraphics[width=10cm]{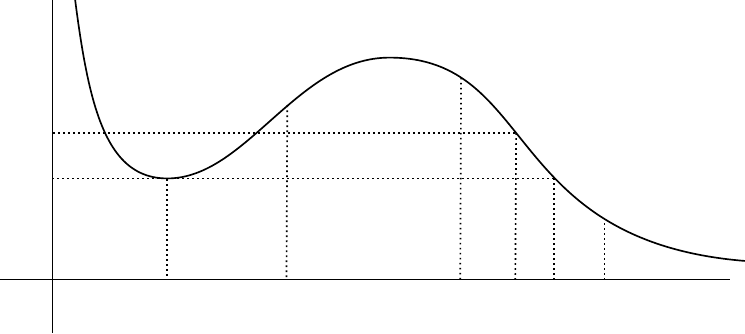}
 \caption{A possible graph of $V_{m}$ in the case $m=M_0$. We think of $r_{2+} - r_{2-}$ and $r_{1+} - r_1$ as being very small, although for clarity this is not so in the picture. In \S\ref{s:proof2} we will take $m = M_0 + O(h)$, and the graph must be suitably changed: in particular, in the proof of \eqref{e:texpasyseq} we will need $E=E_0$ and hence $m<M_0$.}\label{f:v}
\end{figure}

Before giving the proof we give the idea. The operator $P_m - E$ has the same turning point behavior here as in Lemma \ref{l:oneturn}, but this time we must take advantage of the trapping occuring at $r_1$. We do this by finding an energy level $E = E_0 + O(h)$ which is also an eigenvalue of an interior problem (we use the Dirichlet problem on $(0,r_2)$) and then taking as $u_0$ the corresponding eigenfunction (this is our quasimode). This way both $u_0$ and $u_1$ grow exponentially in the classically forbidden region between $r_1$ and $r_2$, giving the desired result.

\begin{proof}
We begin by proving that $P_m$ has an eigenvalue $E = E_0 + O(h)$ as an operator on $L^2((0,r_2))$ with domain $\mathcal D_{r_2}$. Note first that the spectrum of this operator is discrete since the domain is contained in $H^1((0,r_2))$, and let $E$  be the bottom of the spectrum. Then we have $E \ge E_0 - Ch$ because $E_0 = \min\{V_{M_0}(r) \mid r \in (0,r_2)\}$ by  \eqref{e:m0def}, the definition of $M_0$. To see that $E \le E_0 + Ch$, fix $\alpha>0$ such that 
\begin{equation}\label{e:vmalpha}
V_m(r) - E_0 \le \alpha^2(r-r_1)^2 + Ch,
\end{equation} 
near $r_1$, and let  $w(r) =e^{-\alpha(r-r_1)^2/2h}\chi(r)$ where $\chi \in C_c^\infty((0,r_2);[0,1])$ is $1$ near $r_1$ and is supported inside the set where \eqref{e:vmalpha} holds. Then we have
\begin{equation}\label{e:specbd1}
0 \le \int(-h^2 w'' + V_mw - Ew) w \le (E_0-E + Ch)\int w^2,
\end{equation}
which concludes the proof that $|E - E_0| \le Ch$.

Let $u_0$ be a corresponding real-valued eigenfunction, and extend $u_0$ to solve \eqref{e:pjuj} on all of $\mathbb R_+$. 
%The only property of $u_0$ that we will use below is that $u_0$ is a real-valued solution of \eqref{e:pjuj} which vanishes at $r_2$.

Now we may again write $u_0$ in terms of Airy functions as in the proof of Lemma \ref{l:oneturn}, with $\zeta$ defined by \eqref{e:zetadef}, but now  $m' = m$ and $R$ are as in the statement of the present Lemma. The two main differences for our work here compared to that in the proof of Lemma \ref{l:oneturn} are that we have good remainder bounds only when $r \ge r_{1+}$, rather than for all $r \in (0,\infty)$, and that $m$ stays bounded.

More precisely, by \cite[\S 11.3.3]{Olver:Asymptotics}, there are real constants $A_0 = A_0(h)$ and $B_0 = B_0(h)$ such that for $r \ge r_{1+}$ we have \eqref{e:u0airy}, where $\varepsilon_A$ and $\varepsilon_B$ satisfy \eqref{e:epsab+} for $r \ge R$ and \eqref{e:epsab-} for  $r \in [r_{1+},R]$. We will need the following bounds on $\zeta$ near the turning point $R$:
\begin{equation}\label{e:zetarmid}
h|\zeta(r)|^{3/2}  = |V'_{m}(R)|^{1/2}|R-r|^{3/2}(1 + O(|R-r|)), \quad \textrm{ when } |r-R| \le 1,
\end{equation}
where we used $E - V_{m}(r) = (R-r)V_{m}'(R) + O((R-r)^2)$.

Without loss of generality we normalize $u_0$ so that
\begin{equation}\label{e:a02b02}
 A_0^2 + B_0^2 = 1, \qquad A_0 \ge 0.
\end{equation}
Since $u_0(r_2)=0$ we have
\[
 A_0\left(\Ai(-\zeta(r_2)) + O(h) \right) + B_0 \left(\Bi(-\zeta(r_2)) + O(h)\right)=0.
\]
Now observe that by $R = r_2 + O(h)$ and \eqref{e:zetarmid} we have $|\zeta(r_2)| \le C h^{1/3}$, and since $\Bi(0) = \Ai(0)\sqrt 3 >0$,  we obtain
\[
 A_0 +  B_0(\sqrt 3  + O(h^{1/3})) = 0,
\]
and combining with \eqref{e:a02b02} gives
\begin{equation}\label{e:a0b03}
 A_0 = \frac {\sqrt 3}2 +O(h^{1/3}), \qquad  B_0 = - \frac 12 +O(h^{1/3}).
\end{equation}
When $r \ge r_{2+}$ we have \eqref{e:u0lgm} with constants $A$ and $B$, which we can compute as in \eqref{e:apmib} to find
\begin{equation}\label{e:apmib2}
A \pm i B = 2^{-1}\pi^{-1/2}e^{\mp i \pi/4}(A_0 \pm i B_0) + O(h).
\end{equation}

We take $u_1$ to be given by \eqref{e:u1lgm} for $r\ge r_{2+}$ as before, where now \eqref{e:u1airy} holds for $r \ge r_{1+}$ with $A_1$ and $B_1$ given by \eqref{e:a1b1}. We now have \eqref{e:u1abs} for $r \ge r_{1+}$. This time the Wronskian $W=u_0u_1'-u_0'u_1$ obeys
\begin{equation}\label{e:w3}
 W= 2(B+iA)h^{-1} = e^{i11\pi/12} \pi^{-1/2}h^{-1}(1+O(h)),
\end{equation}
where we used \eqref{e:apmib2} and \eqref{e:a0b03}.

Inserting \eqref{e:ax} and \eqref{e:bx} into \eqref{e:u0airy} and \eqref{e:u1airy} and using \eqref{e:epsab-} and \eqref{e:zetarmid} gives
\begin{equation}\label{e:u0s}
\sqrt \pi u_j(r) = B_j (V_m(r)-E)^{-1/4} e^{S(r)/h}(1+O(h)) ,
\end{equation}
for $r \in [r_{1+}, r_{2-}]$, where $j \in \{0,1\}$. Then \eqref{e:krr'midmid} follows from inserting \eqref{e:w3} and \eqref{e:u0s} into \eqref{e:kdef} and using \eqref{e:a1b1} and \eqref{e:a0b03}. We obtain \eqref{e:krr'mid} by the same argument with \eqref{e:u1lgm} in place of \eqref{e:u0s} for $u_1$.
\end{proof}

\section{Proofs of Theorems}\label{s:proof2}

\subsection{Proof of Theorem \ref{t:second}}
Let $0 = \sigma_0 < \sigma_1 = \sigma_2 \le  \sigma_3 \le \cdots$ be the eigenvalues of the unit sphere of dimension $n-1$, repeated according to multiplicity, and let $Y_0,  Y_1, \ \dots$ be a corresponding sequence of orthonormal real eigenfunctions.

If $v$ is in the domain of $P$, then
\begin{equation}\label{e:vsep}
  r^{(n-1)/2}Pr^{-(n-1)/2} v(x) = \sum_{j=0}^\infty Y_jP_{m_j}v_j(r), \quad \textrm{ where } v(x) = \sum_{j=0}^\infty Y_jv_j(r),
\end{equation}
where $P_{m_j}$ is given by \eqref{e:radop} with
\begin{equation}\label{e:mjdef}
 m_j = m_j(h) =  h^2(\sigma_j + \tfrac{(n-1)(n-3)}4), \quad v_j(r) = \int Y_j(\theta)v(r,\theta)d\theta.\end{equation}
Here $d\theta$ is the usual surface measure on the unit sphere and $v_j \in \mathcal D$ for some $\mathcal D = \mathcal D(m_j,h)$ as in \S\ref{s:ode}.

Similarly, if $v \in L^2(\mathbb R^n)$ is compactly supported, then so is $v_j \in L^2(\mathbb R_+)$ and
\begin{equation}\label{e:limabssep}
  r^{(n-1)/2}(P-E \pm i 0)^{-1}r^{-(n-1)/2} v(x) = \sum_{j=0}^\infty Y_j(P_{m_j} - E \pm i0)^{-1}v_j(r),
\end{equation}
with the outgoing resolvent $(P_{m_j} - E - i0)^{-1}$ having integral kernel $K$ given by \eqref{e:kdef}, and the incoming resolvent  $(P_{m_j} - E + i0)^{-1}$ having integral kernel given by the complex conjugate of $K$. (Actually, in \eqref{e:limabssep} we could instead take $v$ in a weighted space but we will not need this.)

Hence if $\chi_L$ and $\chi_R$ are bounded and compactly supported functions on $[0,\infty)$, then
\[
 \|\chi_L(r)(P-E \pm i 0)^{-1}\chi_R(r)\|_{L^2(\mathbb R^n) \to L^2(\mathbb R^n)} = \sup_{j \in \mathbb N_0}  \|\chi_L(r)(P_{m_j}-E \pm i 0)^{-1}\chi_R(r)\|_{L^2(\mathbb R_+) \to L^2(\mathbb R_+)}.
\]

\begin{proof}[Proof of \eqref{e:texpasy}] Suppose without loss of generality that $r_L \le r_R$. Then \eqref{e:texpasy} follows from Lemma \ref{l:lower}, applied with $m= m_j$ for any $j = j(h)$ chosen such that $m_j = M_0 + O(h)$, and with $r_{1+}, \ r_{2-}, \ r_{2+}$ chosen such that $\partial B(0,r^*) \subset U_L$ for some $r^* \in  [r_{1+},r_{2-}]$ and $\partial B(0,r^{**}) \subset U_R$ for some  $r^{**} \in [r_{1+},r_{2-}] \cup [r_{2+},\infty)$. 
\end{proof}

\begin{proof}[Proof of \eqref{e:tnontr}] Fix $r_{2+}>r_2$ such that $U$ is disjoint from a neighborhood of $\overline{B(0,r_{2+})}$. Fix $M>M_0$ and $I \Subset (0,\infty)$ containing $E_0$ such that the hypotheses of  Lemmas \ref{l:noturn} and \ref{l:oneturn} are satisfied (it is enough if $M-M_0$ and the length of $I$ are sufficiently small). Then apply the Hilbert--Schmidt bound
\[
\|\chi(r)(P_{m_j}-E \pm i 0)^{-1}\chi(r)\|_{L^2(\mathbb R_+) \to L^2(\mathbb R_+)}^2 \le \|\chi\|_{L^\infty}^4\int_{\supp \chi} \int_{\supp \chi} |K(r,r')|^2drdr' \le Ch^{-2},
\]
which holds uniformly for all $j \in \mathbb N_0$.
\end{proof}

Finally, to prove \eqref{e:texpasyseq}, by Lemma \ref{l:lower} it suffices to show that $E_0$ is an eigenvalue of $P_{m_j}$ on $\mathcal{D}_{r_2}$, for some sequence $h_j \to 0$ such that $m_j = m_j(h_j)$ obeys $|m_j - M_0| \le C h_j$, for $j$ sufficiently large. This follows from a calculation very similar to that in \eqref{e:vmalpha} and \eqref{e:specbd1}, but note that now we will necessarily have $m_j\le M_0$; this is reasonable because we are now assuming $\max V < E_0$ and hence $M_0>0$ by definition \eqref{e:m0def}. We will first define the sequence $h_j$, then prove that
\begin{equation}\label{e:mjup}
 m_j \le M_0,
\end{equation}
and then prove that
\begin{equation}\label{e:mjlow}
 m_j \ge M_0 - C h_j.
\end{equation}

We define $h_j$ for $j$ sufficiently large by demanding that $h_j^{-2}$ be the bottom of the spectrum of
\[
L_j:= (E_0-V_0(r))^{-1}\left(-\partial_r^2 + (\sigma_j + \tfrac{(n-1)(n-3)}4)r^{-2}\right),
\]
as an operator on $L^2((0,r_2), (E_0 - V_0(r))dr)$ with domain 
\[\{u \in L^2((0,r_2)) \mid L_j u \in  L^2((0,r_2)) \textrm{ and } u(r_2) = 0\}.\]
Note that this operator is selfadjoint (that is, there is no need for an analogue of the condition $\mathcal B u = 0$ as in the definition of $\mathcal D$  in the beginning of \S\ref{s:ode}) as long as $4\sigma_j + (n-1)(n-3) > 3$, that is to say for all but possibly finitely many $j$: see \cite[Theorem 10.4.4]{zet} and also \cite[Theorem X.10]{rs}. The spectrum is discrete since the domain is contained $H^1((0,r_2))$. Also, $h_j \to 0$ will follow from \eqref{e:mjup}. Hence, to show \eqref{e:texpasyseq} it is enough to prove \eqref{e:mjup} and \eqref{e:mjlow}.

\begin{proof}[Proof of \eqref{e:mjup}]
We have, for $u \in C_c^\infty((0,r_2))$,
\[\begin{split}
 \int_0^{r_2} (L_j u)(r) \overline{u(r)} (E_0 - V_0(r))dr &\ge \int_0^{r_2}  (\sigma_j + \tfrac{(n-1)(n-3)}4)r^{-2}|u(r)|^2 dr \\ &\ge  M_0^{-1}(\sigma_j + \tfrac{(n-1)(n-3)}4) \int_0^{r_2}  |u(r)|^2(E_0 - V_0(r))dr, 
\end{split}\]
where we used the fact that by definition
\[
 M_0^{-1} = \min\{r^{-2}(E_0 - V_0(r))^{-1} \mid r \in (0,r_2)\}.
\]
This implies $ h_j^{-2} >  M_0^{-1}(\sigma_j + \tfrac{(n-1)(n-3)}4)$ and hence \eqref{e:mjup}.
\end{proof}

\begin{proof}[Proof of \eqref{e:mjlow}] Let $w(r) = e^{-\alpha(r-r_1)^2/2h_j} \chi(r)$, with $\alpha$ as in \eqref{e:vmalpha} and $\chi$ as in the line after. Then, as in \eqref{e:specbd1},
\[ \begin{split} 
0 &\le \int_0^{r_2} \left[(L_j - h_j^{-2}) w(r)\right]  w(r) (E_0 - V_0(r))dr %\\ &
\le  \int_0^{r_2}  \left( (\sigma_j  - M_0h_j^{-2})r^{-2}  + Ch_j^{-1}\right)w(r)^2  dr.
\end{split}\]
This implies $M_0 h_j^{-2} \le \sigma_j + C h_j^{-1}$ and hence \eqref{e:mjlow}.
\end{proof}

\subsection{Proof of Theorem \ref{t:first}}\label{s:4.2}

To deduce Theorem \ref{t:first} from \ref{t:second}, we begin with some useful formulas relating $M_0$, $r_1$, and $r_2$. We define
\[
 \Phi(r) = r^2(E_0 - V_0(r)).
\]
\begin{lem}\label{l:phi}
With the assumptions and notation of Theorem \ref{t:second}, we have 
\begin{equation}\label{e:phir2+}
 r\ge r_2 \Longrightarrow \Phi'(r)>0,
\end{equation}
\begin{equation}
 \Phi(r_1) = \Phi(r_2) = M_0,
\end{equation}
\begin{equation}
 r_1< r< r_2 \Longrightarrow \Phi(r)<M_0,
\end{equation}
\begin{equation}
 r \le r_2 \Longrightarrow \Phi(r)\le M_0.
\end{equation}
See Figure \ref{f:phi}.
\end{lem}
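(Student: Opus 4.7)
My plan rests on the identity
\[
\Phi(r) = r^2(E_0 - V_0(r)) = r^2(E_0 - V_{M_0}(r)) + M_0,
\]
obtained by substituting $V_0 = V_{M_0} - M_0 r^{-2}$. This has two immediate consequences. First, $\Phi(r) = m$ if and only if $V_m(r) = E_0$, so $\Phi^{-1}(m) = V_m^{-1}(E_0)$ for every $m \in \mathbb{R}$; in particular $\{r_1, r_2\} \subset V_{M_0}^{-1}(E_0) = \Phi^{-1}(M_0)$, which establishes the equalities in (2). Second, differentiating yields
\[
\Phi'(r) = 2r(E_0 - V_{M_0}(r)) - r^2 V'_{M_0}(r).
\]

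For (1), I would observe that since $V'_{M_0} < 0$ on $[r_2, \infty)$ and $V_{M_0}(r_2) = E_0$, integration gives $V_{M_0}(r) \le E_0$ for $r \ge r_2$; hence in the formula above the first term is non-negative and the second is strictly positive throughout $[r_2, \infty)$, so $\Phi'(r) > 0$ there.

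For (4), I would argue by contradiction. Suppose $\Phi(r^*) > M_0$ for some $r^* \in [0, r_2]$; continuity of $\Phi$ on $[0, \infty)$, which holds since $r^2 V_0 \in C^3([0,\infty))$, reduces us to the case $r^* > 0$. Setting $m^* := \Phi(r^*) > M_0 = \Phi(r_2)$, the intermediate value theorem on $[r^*, r_2]$ produces $r'' \in (r^*, r_2)$ with $\Phi(r'') = m^*$. Then $V_{m^*}^{-1}(E_0) = \Phi^{-1}(m^*)$ contains the two distinct points $r^*$ and $r''$, contradicting the definition of $M_0$ since $m^* > M_0$. Statement (3) then follows from (4) by ruling out equality on the interior: if some $r^* \in (r_1, r_2)$ had $\Phi(r^*) = M_0$, then $r^* \in V_{M_0}^{-1}(E_0) \setminus \{r_2\}$, contradicting $r_1 = \max(V_{M_0}^{-1}(E_0) \setminus \{r_2\})$.

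The whole argument amounts to translating between the two descriptions of $M_0$ --- as an extreme angular momentum making $V_m^{-1}(E_0)$ multi-valued, and as an extreme value of $\Phi$ --- and I do not foresee a serious obstacle. The only point that requires a moment of care is the continuity of $\Phi$ up to the origin in the proof of (4), which is immediate from the assumed regularity $V_0 \in r^{-2} C^3([0,\infty))$.
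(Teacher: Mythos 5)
Your overall strategy is the same as the paper's: the paper's proof consists of exactly your computation of $\Phi'$ for \eqref{e:phir2+} plus the one\nobreakdash-line observation that $\Phi(r)\le M_0$ is equivalent to $V_{M_0}(r)\ge E_0$ (equivalently, $\Phi^{-1}(m)=V_m^{-1}(E_0)$), and it leaves the remaining verifications to the reader. Your treatment of (1), (2), and (3) is correct, and your attempt to actually justify (4) from the definition \eqref{e:m0def} of $M_0$ is the right instinct, since that is the only part requiring a real argument.

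However, the intermediate value step in your proof of (4) fails as written. On $[r^*,r_2]$ you have $\Phi(r^*)=m^*$ and $\Phi(r_2)=M_0<m^*$, so the IVT only produces preimages of values \emph{strictly between} $M_0$ and $m^*$; it does not produce a second point of $\Phi^{-1}(m^*)$ in $(r^*,r_2)$ --- if $\Phi$ is strictly decreasing on that interval, no such $r''$ exists. The repair is easy and stays within your framework: take the second preimage on the other side of $r_2$. Since $\Phi(r)=E_0r^2-r^2V_0(r)\to+\infty$ as $r\to\infty$ (because $r^2V_0$ is bounded and $E_0>0$), and $\Phi(r_2)=M_0<m^*$, the IVT on $[r_2,\infty)$ gives $r''>r_2$ with $\Phi(r'')=m^*$; then $r^*$ and $r''$ are two distinct points of $V_{m^*}^{-1}(E_0)$ with $m^*>M_0$, contradicting \eqref{e:m0def}. (Alternatively, fix any $\mu\in(M_0,m^*)$ and find preimages of $\mu$ in both $(r^*,r_2)$ and $(r_2,\infty)$.) With that one step corrected, your proof is complete.
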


\begin{figure}[h]
\labellist
\small
\pinlabel $M_0$ [l] at 2 100
\pinlabel $r_1$ [l] at 90 16
\pinlabel $r_2$ [l] at 281 16
\endlabellist
 \includegraphics[width=10cm]{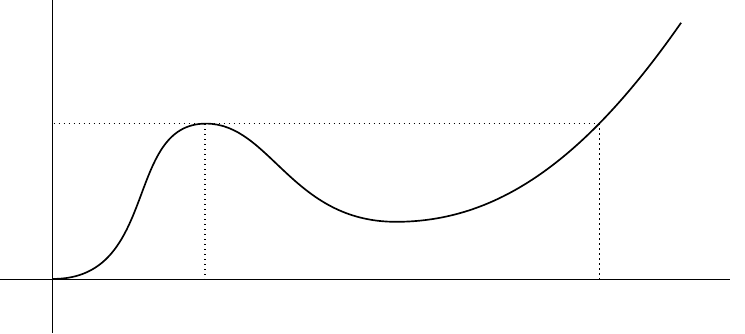}
 \caption{A possible graph of $\Phi$.}\label{f:phi}
\end{figure}

\begin{proof}
 To show \eqref{e:phir2+} we compute
 \[
  \Phi'(r) = 2r(E_0 - V_0(r)) - r^2 V_0'(r) =  2r(E_0-V_{M_0}(r)) - r^2 V_{M_0}'(r),
 \]
and the right hand side is positive when $r \ge r_2$ by \eqref{e:monass}. To prove the other three statements we observe $\Phi(r) \le M_0$ is the same as $V_{M_0(r)} \ge E_0$, with equality always holding simultaneously.
\end{proof}

Now suppose that $V_0$ is compactly supported in $[0,1)$ and $\min V_0<0$. Then $\Phi(1) = E_0$. If
\[
 E_0 < \max_{r \in [0,1)} (-r^2 V_0(r)) \le \max_{r \in [0,1)} \Phi(r) = M_0,
\]
then $r_1 < 1 < r_2$, and we have moreover
\begin{equation}\label{e:r2comp}
 r_2 = r_2(E_0) = \sqrt{M_0/E_0},
\end{equation}
and
\begin{equation}\label{e:r2asy}
 \lim_{E_0 \to 0} E_0 r_2^2 = \lim_{E_0 \to 0} M_0 =\min\{m>0 \mid V_m(r) \ge 0 \textrm{ for all }r>0\} = - \min \{r^2 V_0(r)\},
\end{equation}
as desired.

\subsection{Proof of Theorem \ref{t:wave}}\label{s:wave}

It is convenient to work over the Hilbert space $\mathcal H = \dot H^1(\mathbb R^n) \oplus L^2(\mathbb R^n, dx/c_0(r)^2)$. Let 
\[
 B = -i \left( \begin{array}{cc} 0 & 1 \\ c^2 \Delta & 0\end{array}\right).
\]
Then $B$ is selfadjoint on $\mathcal H$ with domain $\{(u_0,u_1) \in \mathcal H \colon \Delta u_0 \in L^2(\mathbb R^n), u_1 \in H^1(\mathbb R^n)\}$, and we will study the unitary wave propagator $e^{itB} \colon \mathcal H \to \mathcal H$.

Theorem \ref{t:wave} follows from

\begin{lem}\label{p:wave} Let $c, \, \rho,$ and $R_c$ be as in Theorem \ref{t:wave}.
 \begin{enumerate}
  \item If $\chi \in C_c^\infty(\mathbb R^n)$ has support disjoint from the closed ball $\overline{B(0,R_c)}$, then there is $C>0$ such that
\begin{equation}\label{e:wdecay}
 \int_{-\infty}^\infty \|\chi e^{itB} u\|_{\mathcal H}^2dt \le C \|u\|_{\mathcal H}^2,
\end{equation}
for all $u \in \mathcal H$.
\item If $\chi \in C_c^\infty(\mathbb R^n;[0,\infty))$ is positive on the sphere $\partial B(0,R)$ for some $R \in [\rho ,R_c]$, then
\begin{equation}\label{e:wnondecay}
\sup_{\|u\|_{\mathcal H} = 1} \int_{-\infty}^\infty \|\chi e^{itB} u\|_{\mathcal H}^2dt = +\infty.
\end{equation}
 \end{enumerate}
\end{lem}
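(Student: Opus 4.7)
The plan is to translate the wave estimates \eqref{e:wdecay} and \eqref{e:wnondecay} into (upper, resp. lower) bounds on $\im R_B(\lambda)$ via Kato's smoothness criterion, to reduce from $B$ to $-c^2\Delta$ using $B^2 = (-c^2\Delta)\oplus(-c^2\Delta)$ and $R_B(\lambda) = (B+\lambda)(B^2-\lambda^2)^{-1}$, and to transfer to the semiclassical Schr\"odinger operator $P = -h^2\Delta + V_0$ with $V_0 = \kappa^{-2} - c_0^{-2}$, $E_0 = \kappa^{-2}$, and $h = 1/\lambda$. As observed in the remark after Theorem~\ref{t:wave}, this identification sends $R_c$ to $r_2$, so the thresholds match. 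Concretely,
\begin{equation*}
(-c^2\Delta - \lambda^2)^{-1} = h^2 (P - E_0)^{-1} c^{-2},
\end{equation*}
and for $\lambda' = \lambda + O(1)$ the same formula holds with $P - E_0$ replaced by a uniform $O(h)$ perturbation, still within the window allowed by Theorem~\ref{t:second}.

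\textbf{Part (1).} By Kato's theorem, \eqref{e:wdecay} is equivalent to
\begin{equation*}
\sup_{\lambda\in\mathbb R,\,\varepsilon>0}\|\chi\,\im R_B(\lambda+i\varepsilon)\chi\|_{\mathcal H\to\mathcal H}<\infty.
\end{equation*}
For $|\lambda|$ large, Theorem~\ref{t:second}(1) with $U\supset\supp\chi$ disjoint from $\overline{B(0,r_2)}$ yields $\|\chi(-c^2\Delta-\lambda^2\pm i0)^{-1}\chi\|_{L^2\to L^2}\le C/|\lambda|$; via the block-matrix reduction (together with a standard elliptic control of $\chi\Delta(-c^2\Delta-\lambda^2)^{-1}\chi$) this converts to the required uniform bound on $\chi\,\im R_B(\lambda)\chi$. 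For bounded $|\lambda|$ away from zero a standard limiting absorption principle for the compactly supported perturbation $-c^2\Delta-(-\kappa^2\Delta)$ applies; near $\lambda=0$ the assumption $n\ge 3$ (via Sobolev embedding on $\dot H^1$) rules out singular behavior. The Kato criterion is satisfied, giving \eqref{e:wdecay}.

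\textbf{Part (2).} Argue by contradiction: suppose \eqref{e:wdecay} held for some $C$. The same Kato argument would force $\|\chi\,\im R_B(\lambda+i0)\chi\|$ uniformly bounded, hence $\|\chi\,\im(-c^2\Delta-\lambda^2-i0)^{-1}\chi\|\le C''/|\lambda|$ for $|\lambda|$ large. Since $c>0$ on $\mathbb R^n$ implies $\max V_0<E_0$, \eqref{e:texpasyseq} provides $h_k\downarrow 0$ and $m_{j_k}=M_0+O(h_k)$ so that $E_0$ is an eigenvalue of $P_{m_{j_k}}$ on $\mathcal D_{r_2}$ and the radial kernel satisfies \eqref{e:krr'midmid}. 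Observing $r_1<\rho\le R\le r_2$ (the first inequality because $V_{M_0}(r)=M_0/r^2$ is strictly monotone for $r\ge\rho$, forcing $r_1<\rho$), fix $r_{1+}\in(r_1,R)$ and $r_{2-}\in(R,r_2)$, and take $u_k(x)=r^{-(n-1)/2}Y_{j_k}(\theta)\varphi(r)$ for some nonnegative $\varphi\in C_c^\infty((r_{1+},r_{2-}))$ with $\varphi(R)=1$. Since $c$ is radial, the separation of variables \eqref{e:limabssep} applies to $-c^2\Delta$, and the matrix element $\bigl\langle\chi u_k,\,\im(-c^2\Delta-\lambda_k^2-i0)^{-1}\chi u_k\bigr\rangle$ reduces to a one-dimensional integral of $\im K$ against $\varphi\otimes\varphi$ weighted by the spherical average of $\chi|Y_{j_k}|^2$; the latter is bounded below by $c_\chi>0$ in a neighborhood of $R$ since $\chi$ is continuous and positive on $\partial B(0,R)$. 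The constant-sign factor $\tfrac12=\im(-e^{-i\pi/6})$ in \eqref{e:krr'midmid} prevents cancellation, so this matrix element is bounded below in absolute value by $c\,c_\chi^2\,h_k^{-1}e^{2S(R)/h_k}$, contradicting the polynomial upper bound.

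\textbf{Main obstacle.} The critical ingredient is the imaginary-part lower bound on the resolvent kernel $K$ supplied by the explicit phase $-e^{-i\pi/6}$ in \eqref{e:krr'midmid}; a modulus lower bound as in \eqref{e:krr'mid} would not contradict Kato's criterion, which controls only $\im R_B$. The phase comes from the Airy asymptotics of both $u_0$ and $u_1$ in Lemma~\ref{l:lower} and must survive all perturbations in $m$, $E$, and the potential introduced in the semiclassical-to-high-frequency translation. A secondary technical matter is uniform control over bounded $|\lambda|$, particularly near $\lambda=0$, which requires the $n\ge 3$ hypothesis from \S\ref{s:not}.
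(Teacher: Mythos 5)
Your proposal is correct and follows essentially the same route as the paper: a Kato-smoothing/$TT^*$ reduction to uniform bounds on the boundary values of $\chi(B-\lambda)^{-1}\chi$, the block-matrix formula \eqref{e:resbblock} reducing matters to the scalar resolvent $(-c^2\Delta-\lambda^2)^{-1}$, the rescaling $h=1/\lambda$, $E_0=\kappa^{-2}$, $V_0=\kappa^{-2}-c_0^{-2}$ identifying $R_c$ with $r_2$, and, crucially for part (2), the explicit phase in \eqref{e:krr'midmid} giving a lower bound on the imaginary part of the kernel rather than merely its modulus. The paper packages the resolvent input as Lemma \ref{l:bres} and runs the $AA^*$ argument explicitly via Plancherel rather than citing Kato's theorem, but the substance is identical.
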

Indeed, to prove Theorem \ref{t:wave} from Lemma \ref{p:wave}, we observe that if $\supp \chi_0 \subset U$ and $\chi_1 = 1$ near $U$, then
\[
C^{-1}\|\chi_0 e^{itB} u\|_{\mathcal H}^2 \le \mathcal E_U(t) \le C \|\chi_1 e^{itB} u\|_{\mathcal H}^2,
\]
with $u = (w,\partial_tw)$, where for the first inequality we used Poincar\'e's inequality 
\[\|v\|_{L^2(U)} \le C \|\nabla v\|_{L^2(U)}, \qquad v \in C_c^\infty(U).\]

To prove Lemma \ref{p:wave}, we will need some facts about the resolvent of $B$, based on the formula
\begin{equation}\label{e:resbblock}
 (B- \lambda)^{-1} = \left(\begin{array}{cc} \lambda  (- c^2\Delta - \lambda^2)^{-1}  \quad & -i  (- c^2\Delta - \lambda^2)^{-1} \\ i\lambda^2(- c^2\Delta - \lambda^2)^{-1} + i \quad & \lambda  (- c^2\Delta - \lambda^2)^{-1} \end{array} \right), \quad \lambda \in \mathbb C \setminus \mathbb R;
\end{equation}
see  \cite[p. 265]{pv}, \cite[(2.13)]{burq2003}, and \cite[(6.6)]{sh2}. For any $\chi \in C_c^\infty(\mathbb R^n)$, the cutoff resolvent $\chi (- c^2\Delta - \lambda^2)^{-1} \chi$ extends continuously from the lower, or upper, half plane to its closure, as an operator from $L^2(\mathbb R^n)$ to $H^2(\mathbb R^n)$ (see item 1 of \cite[Lemma 4.1]{ddh} for a proof using the Sj\"ostrand--Zworski black box theory \cite{sz}). By \eqref{e:resbblock} we see that $\chi (B-\lambda)^{-1}\chi$ has corresponding continuous extensions as an operator from $\mathcal H$ to $\mathcal H$, and we denote these by $\chi (B - \lambda \pm i0)^{-1} \chi$, where $\lambda \in \mathbb R$.

\begin{lem}\label{l:bres}
Let $c, \, \rho,$ and $R_c$ be as in Theorem \ref{t:wave}.
 \begin{enumerate}
  \item If $\chi \in C_c^\infty(\mathbb R^n)$ has support disjoint from the closed ball $\overline{B(0,R_c)}$, then there is $C>0$ such that
\begin{equation}\label{e:wnontr}
\|  \chi (B - \lambda \pm i0)^{-1} \chi \|_{\mathcal H \to \mathcal H} \le C,
\end{equation}
for all $\lambda \in\mathbb R$.

\item If $\chi \in C_c^\infty(\mathbb R^n;[0,\infty))$ is positive on the sphere $\partial B(0,R)$ for some $R \in [\rho ,R_c]$, then there are $C>0$ and a sequence $\lambda_j \to + \infty$ such that
 \begin{equation}\label{e:wexp}
\| \chi \left[(B - \lambda_j + i0)^{-1} - (B - \lambda_j - i0)^{-1}\right] \chi \|_{\mathcal H \to \mathcal H} \ge e^{C\lambda_j}.
\end{equation}
\end{enumerate}
\end{lem}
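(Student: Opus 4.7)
The plan is to reduce Lemma \ref{l:bres} to Theorem \ref{t:second} via the semiclassical substitution $h = 1/|\lambda|$, $E_0 = \kappa^{-2}$, $V_0(r) = \kappa^{-2}-c_0(r)^{-2}$. Under this identification $V_0 \in C_c^\infty(B(0,\rho))$ with $\max V_0 < E_0$ (since $c_0$ is bounded), all hypotheses of Theorem \ref{t:second} are satisfied, and the third remark after Theorem \ref{t:wave} gives $R_c = r_2$. A direct computation using $E_0 - V_0 = c_0^{-2}$ yields
\[
-c^2\Delta - \lambda^2 = \lambda^2 c^2 (P-E_0), \qquad P := -h^2\Delta + V_0,
\]
hence $\chi(-c^2\Delta - \lambda^2 \pm i0)^{-1}\chi = h^2 \chi(P-E_0\pm i0)^{-1}\chi\, c^{-2}$, so cutoff resolvent bounds for $P-E_0$ translate into bounds for $(-c^2\Delta-\lambda^2)$ carrying an extra factor $1/\lambda^2$.

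For part (1), $\supp\chi$ is disjoint from $\overline{B(0,r_2)}$, so \eqref{e:tnontr} gives $\|\chi(P-E_0\pm i0)^{-1}\chi\|_{L^2\to L^2}=O(1/h)$ for small $h$, equivalently $\|\chi(-c^2\Delta-\lambda^2\pm i0)^{-1}\chi\|_{L^2\to L^2}=O(1/|\lambda|)$ for $|\lambda|$ large. I feed this into the block identity \eqref{e:resbblock}: the $(1,1)$, $(1,2)$, and $(2,2)$ entries are $O(1)$ as operators $\mathcal H\to\mathcal H$ after using the Poincar\'e--Sobolev bound $\|\chi u_0\|_{L^2}\le C\|u_0\|_{\dot H^1}$ valid in dimension $n\ge 3$. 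The $(2,1)$ entry, which equals $i(-c^2\Delta)(-c^2\Delta-\lambda^2)^{-1}$, is $O(1)$ as a map $\dot H^1\to L^2$ by commuting $\chi$ past $\Delta$ and applying semiclassical elliptic regularity: the $\lambda^2$ appearing in $(-c^2\Delta)(-c^2\Delta-\lambda^2)^{-1} = I + \lambda^2(-c^2\Delta-\lambda^2)^{-1}$ is absorbed by the $1/|\lambda|$ gain, with commutator terms handled by widening the cutoff. For $|\lambda|$ in a bounded range, continuity of the cutoff resolvent up to and including $\lambda = 0$ (using the absence of zero-energy resonance of $-c^2\Delta$ when $n\ge 3$) gives the same bound. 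Combining the two regimes yields \eqref{e:wnontr}.

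For part (2), a short computation using \eqref{e:r2comp} and the fact that $\Phi(r)=r^2/\kappa^2$ is strictly increasing on $[\rho,\infty)$ shows $r_1 \in (0,\rho)$, so $R\in[\rho,R_c]\subset[r_1,r_2]$, and the hypothesis of \eqref{e:texpasyseq} is met. Choosing $U_L = U_R$ to be a small neighborhood of $\partial B(0,R)$ contained in $\{\chi>0\}$, \eqref{e:texpasyseq} provides a sequence $h_j\to 0$ with $\|\chi(P-E_0\pm i0)^{-1}\chi\|_{L^2\to L^2}\ge e^{C_2/h_j}$, which translates via the identity above to $\|\chi R_\pm\chi\|_{L^2\to L^2}\ge e^{C\lambda_j}$ with $\lambda_j := 1/h_j$ and $R_\pm := (-c^2\Delta-\lambda^2\mp i0)^{-1}$. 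The crucial extra ingredient is the explicit phase factor $e^{-i\pi/6}$ in \eqref{e:krr'midmid}, which makes the kernel of $\chi(P-E_0+i0)^{-1}\chi$ satisfy the pointwise lower bound $|\im K|\ge (\tfrac12 + O(h^{1/3}))|K|$; the same test vectors used in the proof of \eqref{e:texpasyseq} thus upgrade the modulus bound to $\|\chi(R_+-R_-)\chi\|_{L^2\to L^2}\ge e^{C\lambda_j}$. Finally, the $(2,1)$ entry of the difference $(B-\lambda_j+i0)^{-1}-(B-\lambda_j-i0)^{-1}$ equals $i\lambda_j^2(R_+-R_-)$ (the constants $\pm i$ cancel); testing against inputs of the form $(0,u_1)\in\mathcal H$ produces
\[
\|\chi\bigl[(B-\lambda_j+i0)^{-1}-(B-\lambda_j-i0)^{-1}\bigr]\chi\|_{\mathcal H\to\mathcal H}\;\ge\;|\lambda_j|\,\|\chi(R_+-R_-)\chi\|_{L^2\to L^2}\;\ge\; e^{C\lambda_j},
\]
which is \eqref{e:wexp}.

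The main obstacle I anticipate is the elliptic-regularity step in part (1): the cutoff Helmholtz bound is only $O(1/|\lambda|)$ in $L^2$, while the $(2,1)$ block of $(B-\lambda)^{-1}$ carries a factor $\lambda^2$. Resolving this requires treating that block as a $\dot H^1\to L^2$ operator and promoting the $L^2$ resolvent bound to a semiclassical $H^1$ estimate via commutators of $\chi$ with $\Delta$, so that the $\dot H^1$ structure of the input absorbs one power of $|\lambda|$ and \eqref{e:tnontr} supplies the other; part (2), by contrast, is reasonably direct once one extracts the imaginary-part information from \eqref{e:krr'midmid}.
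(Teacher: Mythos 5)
Your proof is correct and takes essentially the same route as the paper: the same rescaling $h=\lambda^{-1}$, $E_0=\kappa^{-2}$, $V_0=\kappa^{-2}-c_0^{-2}$ with $r_2=R_c$, the bound \eqref{e:tnontr} fed into \eqref{e:resbblock} for part (1) (where the paper simply cites the standard argument of Burq that you spell out), and for part (2) the same ``refined'' version of \eqref{e:texpasyseq} for the resolvent difference, obtained exactly as you say from the phase $e^{-i\pi/6}$ in \eqref{e:krr'midmid} forcing $|\im K|\ge(\tfrac12+O(h^{1/3}))|K|$. (One immaterial slip: testing against $(0,u_1)$ engages the $(2,2)$ entry $\lambda(R_+-R_-)$ rather than the $(2,1)$ entry, but the exponential lower bound follows either way.)
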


\begin{proof}[Proof of Lemma \ref{l:bres}]
 We use the identity
 \[
 (- c^2\Delta - (\lambda \pm i0)^2)^{-1} = (-h^2 \Delta + V - (\kappa^{-2} \pm i0))^{-1} c^{-2}\lambda^{-2},
\]
where $h = \lambda^{-1}$ and $V = \kappa^{-2} - c^{-2}$. To check this, note that if $f \in L^2(\mathbb R^n)$ is compactly supported, and
\[
 (-c^2\Delta - \lambda^2)u = f, \qquad  (-h^2 \Delta + V - \kappa^{-2})v = c^{-2}\lambda^{-2}f,
\]
with $u$ and $v$ both outgoing, then $u=v$ (see \cite[Theorem 3.34 and Theorem 4.18]{dz}). 

With $E_0 = \kappa^{-2}$ and $V_0 = \kappa^{-2} - c_0^{-2}$, we have, in the notation of Theorem \ref{t:second} and Lemma \ref{l:phi}, $\Phi(r) = r^2/c_0^2(r)$. Then
\[
M_0= \max_{r \in [0,\rho]}\Phi(r) =R_c^2/\kappa^2 > \rho^2/\kappa^2 = \Phi(\rho),
\]
and hence $r_1 < \rho < r_2 = R_c$.

\begin{enumerate}
 \item By \eqref{e:tnontr}, for all $\chi \in C_c^\infty(\mathbb R^n)$ having support disjoint from the closed ball $\overline{B(0,R_c)}$,  we have
 \[
   \| \chi (- c^2\Delta - (\lambda \pm i0)^2)^{-1} \chi \|_{L^2 \to L^2}  \le C\langle \lambda\rangle^{-1}. 
 \]
 By a standard argument (see for example the proof of \cite[Proposition 2.4]{burq2003}), together with \eqref{e:resbblock} this implies
\eqref{e:wnontr} for all such $\chi$.

\item To prove \eqref{e:wexp}, we argue similarly, but using the following refined version of \eqref{e:texpasyseq}:
\[
 \|\chi \left[ (-h_j^2 \Delta + V - (\kappa^{-2} + i0))^{-1} - (-h_j^2 \Delta + V - (\kappa^{-2} - i0))^{-1}\right]\chi\|_{L^2(\mathbb R^n) \to L^2(\mathbb R^n)} \ge e^{C/h_j},
\]
where $\chi \in C_c^\infty(\mathbb R^n;[0,\infty))$ is positive on the sphere $\partial B(0,R)$ for some $R \in [\rho ,R_c]$, and $h_j$ is the same sequence appearing in \eqref{e:texpasyseq}. This refined version follows from \eqref{e:krr'midmid} in the same way that \eqref{e:texpasyseq} does, and it implies \eqref{e:wexp} with $\lambda_j = h_j^{-1}$.
\end{enumerate}
\end{proof}

\begin{proof}[Proof of Lemma \ref{p:wave}]
%This is a version of Kato smoothing.
%By \cite[Theorem XIII.25]{rs}, the following are equivalent:
%\begin{equation}\label{e:rs1}
%\sup_{\|u\|_{\mathcal H} = 1} \int_{-\infty}^\infty \|\chi e^{itB} u\|_{\mathcal H}^2dt < + \infty,
%\end{equation}
%and
%\begin{equation}\label{e:rs4}
% \sup_{\mu \in \mathbb C \setminus \mathbb R, \ \|u\|_{\mathcal H} = 1} \left| \langle u,  \chi \left[(B - \mu)^{-1} - (B - \bar \mu)^{-1}\right] \chi u \rangle_{\mathcal H} \right| < +\infty.
%\end{equation}
%Then \eqref{e:wdecay} follows from \eqref{e:wnontr} and the 

This is a version of Kato smoothing \cite{k}; see also \cite[\S XIII.7]{rs} for another general presentation of the theory. We will use an $AA^\ast$ argument as in \cite[\S 2.3]{bgt} and \cite[\S 2]{burq2003}; see also \cite[\S 7.1]{dz}. We define the operator
$$A:\mathcal{H} \ni u\mapsto \chi e^{itB}u\in \mathcal S'(\mathbb{R};\mathcal{H})$$
with adjoint
$$A^\ast \colon  \mathcal S(\mathbb{R} ;\mathcal{H}) \ni f \mapsto\int_\mathbb{R}\chi 
e^{-isB}f(s)ds \in \mathcal H.$$
Boundedness of $A$ from $\mathcal H$ to $L^2(\mathbb R; \mathcal H)$ is equivalent to boundedness of $AA^\ast$ from $L^2(\mathbb R; \mathcal H)$ to $L^2(\mathbb R; \mathcal H)$  . We write
\begin{equation}\label{e:aa*def}
AA^\ast f= \chi \int_{-\infty}^t e^{i(t-s)B}\chi f(s)ds + \chi \int_t^\infty  e^{i(t-s)B}\chi f(s)ds =: \chi u_-(t) + \chi u_+(t).
\end{equation}
Now let us suppose temporarily that there is $T>0$ such that
\begin{equation}\label{e:fcomp}
 \supp f \subset [-T,T],
\end{equation}
so that in particular
\[
 \supp u_- \subset [-T,\infty) \textrm{ and } \supp u_+ \subset (-\infty,T].
\]
We use the equations
\[
 u'_\pm(t) = i B u_\pm(t) \mp \chi f(t)
\]
to compute the Fourier--Laplace transforms
\begin{equation}\label{e:upm}
\hat u_\pm (\lambda \pm i \varepsilon) : = \int_{\mathbb R} e^{-i(\lambda \pm i \varepsilon)t}u_\pm (t)dt = \mp i (B- (\lambda \pm i \varepsilon))^{-1} \chi \hat f(\lambda \pm i \varepsilon), 
\end{equation}
where $\lambda \in \mathbb R$ and $\varepsilon >0$.
By Plancherel's theorem,
\begin{equation}\label{e:plan}\begin{split}
 \int_{\mathbb R} \|A A^* f\|^2_{\mathcal H} dt &= \frac 1 {2\pi} \int_{\mathbb R} \|\chi \hat u_-(\lambda-i0) + \chi \hat u_+(\lambda+i0)\|^2_{\mathcal H} d\lambda\\
 &= \frac 1 {2\pi} \int_{\mathbb R} \left\|\chi \left[ (B - \lambda + i0)^{-1} - (B-\lambda - i0)^{-1}\right]\chi \hat f(\lambda) \right\|^2_{\mathcal H} d\lambda,
\end{split}\end{equation}
where we used \eqref{e:aa*def} and \eqref{e:upm}, and where we allow the integrals to take the value $+\infty$.
By density, \eqref{e:plan} holds also without the assumption \eqref{e:fcomp}. Now \eqref{e:wdecay} follows from \eqref{e:wnontr}.

To deduce \eqref{e:wnondecay} from \eqref{e:wexp}, we must show that \eqref{e:wexp} implies
\[
 \left\|\chi \left[ (B - \bullet + i0)^{-1} - (B-\bullet - i0)^{-1}\right]\chi \right\|_{L^2(\mathbb R; \mathcal H) \to L^2(\mathbb R; \mathcal H)} = + \infty.
\]
But this is clear since $\lambda \mapsto \chi \left[ (B - \lambda + i0)^{-1} - (B-\lambda - i0)^{-1}\right]\chi$ is continuous by the discussion following \eqref{e:resbblock} and unbounded by \eqref{e:wexp}.

%the fact that the above argument is basically reversible. Go through the proof of the bounds on $K(r,r')$ to show that the imaginary part is also bounded below. This implies a lower bound on 
%\[
% \|\chi[(B-\lambda+i0)^{-1}-(B-\lambda-i0)^{-1}]\chi\|.
%\]
%Make the argument rigorous by inserting factors of $e^{-\varepsilon |t|}$ in appropriate places so that the integrals converge. May also use finite time intervals and let length of interval go to infinity. Use semicontinuity of the resolvent norm to show that a lower bound at a point implies a lower bound nearby?

%The implication \eqref{e:wexp} $\Rightarrow$ \eqref{e:wnondecay} follows from a well-known quasimode argument. More precisely, \eqref{e:wexp} implies that we can find $u_k$ such that $\|\chi u_k\|_\mathcal{H}=1$ and $\chi f_k=(B-\lambda_k)u_k$ satisfies
%$$\|\chi f_k\|_\mathcal{H}\leq\frac{1}{2}e^{-C\lambda_k}.$$
%Since
%$$\partial_t(e^{it(B-\lambda_k)}\chi u_k)=ie^{it(B-\lambda_k)}\chi f_k + ie^{it(B-\lambda_k)}[B,\chi]u_k,$$

%MUST STILL FIX MORE CUTOFFS

%we see that
%$$e^{it(B-\lambda_k)}u_k=u_k+i\int_0^te^{is(B-\lambda_k)}f_kds,$$
%and
%thus
%$$e^{itB}u_k=e^{it\lambda_k}u_k+i\int_0^te^{isB}e^{i(t-s)\lambda_k}f_kds.$$
%Therefore for any $t\geq0$,
%$$\|e^{itB}u_k\|\geq\|u_k\|-t\|f_k\|\geq 1-\frac{1}{2}te^{-C\lambda_k}$$
%and taking $T=\frac{1}{4}e^{C\lambda_k}$, we have
%$$\int_0^T\|e^{itB}u_k\|dt\geq\frac{T}{2}=\frac{1}{8}e^{C\lambda_k}.$$
%This finishes the proof of \eqref{e:wnondecay}.
\end{proof}

\appendix

\section{Airy functions}
\label{s:airy}
In this appendix we review some needed facts about the Airy functions given by
\begin{equation}\label{e:airydef}\begin{split}
 \Ai(x) &= \frac 1 \pi \int_0^\infty \cos\left(\tfrac {t^3}3 + xt\right)dt, \\
 \Bi(x) &= \frac 1 \pi \int_0^\infty \left(e^{-\frac{t^3}3 + xt} +\sin\left(\tfrac {t^3}3 + xt\right)\right)dt.
\end{split}\end{equation}
These are  solutions to  $u'' = xu$ satisfying $\Bi(0) = \sqrt 3 \Ai(0)>0$,
and as $x \to \infty$ we have
\begin{equation}\label{e:ax}
2 \sqrt \pi x^{1/4} \Ai(x) = e^{- 2x^{3/2}/3}(1 + O(x^{-3/2})),
\end{equation}
\begin{equation}\label{e:bx}
 \sqrt \pi x^{1/4} \Bi(x) = e^{ 2x^{3/2}/3}(1 + O(x^{-3/2})),
\end{equation}
\begin{equation}\label{e:a-x}
\sqrt \pi x^{1/4} \Ai(-x) = \cos\left(\tfrac 23 x^{3/2} - \tfrac \pi 4\right) + O(x^{-3/2}),
\end{equation}
\begin{equation}\label{e:b-x}
-\sqrt \pi x^{1/4} \Bi(-x) = \sin\left(\tfrac 23 x^{3/2} - \tfrac \pi 4\right) + O(x^{-3/2}).
\end{equation}
These results can be found in \cite[\S11.1]{Olver:Asymptotics}, among other places. In particular,  there is a constant $C_A>0$ such that, for all real $x$ and $x'$ satisfying $x' \le x$, we have
\begin{equation}\label{e:abbound}
 |x^{1/2}{x'}^{1/2}\Ai(x)^2(\Ai(x')^2 + \Bi(x')^2)| \le C_A^2.
\end{equation}

\section{Remainder bounds for Airy approximations}\label{s:apperror}

In this appendix we prove \eqref{e:epsab+} and \eqref{e:epsab-}. By \cite[Chapter 11, Theorem 3.1]{Olver:Asymptotics}, it is enough to show that $\int_0^\infty |G(r)|dr$ is uniformly bounded for all $h$ and $m$, where
\begin{equation}\label{e:gabs}
 G(r) = |f(r)|^{-1/2}\left(5 f(r)^{-2}f'(r)^2 - 4 f(r)^{-1}f''(r) - 16 g(r) - 5f(r)\zeta_O(r)^{-3}\right).
\end{equation}
%\begin{equation}\label{e:gnonmid}
% G(r) = |f(r)|^{-1/4}(|f(r)|^{-1/4})'' - |f(r)|^{-1/2}g(r) - \tfrac 5{16} |f(r)|^{1/2} |\zeta_O(r)|^{-3},
%\end{equation}
with $f$ and $g$ as in \eqref{e:fgdef} and
\begin{equation}\label{e:zetao}
 \zeta_O(r) = - m^{-1/3}h^{2/3}\zeta(r) = \pm \left|\frac 32 \int_R^r\sqrt{f(r')}dr'\right|^{2/3}, \textrm{ when } \pm(R-r)\ge 0.
\end{equation}
 We will first show that there is $\delta> 0$ such that
\begin{equation}\label{e:gmidbound}
  \int_{R-\delta\sqrt m}^{R+\delta \sqrt m}|G(r)|dr \le C,
\end{equation}
and then we will show that
\begin{equation}\label{e:gnonmidbound}
  \int_{R+\delta\sqrt m}^\infty|G(r)|dr  +   \int_0^{R-\delta\sqrt m}|G(r)|dr\le C.
\end{equation}

\begin{proof}[Proof of \eqref{e:gmidbound}]
By Taylor's theorem, for $r \in [R-\delta \sqrt m, R+\delta \sqrt m]$ we have
\[
 f(r) = (r-R)f'(R)\left(1 + \frac {f''(R)}{2f'(R)}(r-R) + O(m^{-1}(r-R)^2)\right),
\]
where we used $f'(R) \le - m^{-3/2}/C$ and $|f'''(r)| \le C m^{-5/2}$  (these follow from \eqref{e:sqrtm}).
Similar expansions hold for powers and derivatives of $f$, and inserting the expansion for $\sqrt f$ into \eqref{e:zetao} gives
\[
 \zeta_O(r) = (r-R)f'(R)^{1/3}\left(1 +  \frac {f''(R)}{10f'(R)}(r-R) + O(m^{-1}(r-R)^2)\right).
\]
Hence
\[
 f(r)^{-2}f'(r)^2  = (r-R)^{-2}\left(1 + \frac {f''(R)}{f'(R)}(r-R) + O(m^{-1}(r-R)^2) \right),
\]
\[
 f(r)^{-1}f''(r) = (r-R)^{-1}\frac{f''(R)}{f'(R)}\left(1 +  O(m^{-1}(r-R)^2)\right),
\]
\[
 f(r)\zeta_O(r)^{-3} = (r-R)^{-2}\left(1 + \frac {f''(R)}{5f'(R)}(r-R) +  O(m^{-1}(r-R)^2)\right).
\]
Combining these and using $|g(r)| \le C m^{-1/2}$ gives
\[
 |G(r)| \le Cm^{-1}(r-R)^{-1/2},
\]
which implies \eqref{e:gmidbound}.
\end{proof}

%\begin{proof}[The old proof of \eqref{e:gmidbound}]
%By \cite[Chapter 11, (3.05) and (3.07)]{Olver:Asymptotics}, we have
%\begin{equation}\label{e:gmid}
% G(r)=  |\zeta_O(r)|^{-1/2} \hat f(r)^{-1/2}\left(\tfrac 5 {16} \hat f'(r)^2\hat f(r)^{-2} - \tfrac 14 \hat f''(r)\hat f(r)^{-1}- g(r)\right),
%\end{equation}
%where
%\[
% \hat f(r) =  f(r)/\zeta_O(r) = \zeta'_O(r)^2 >0.
%\]

%For the remainder of this proof we consider only $r \in [R-\delta \sqrt  m, R + \delta \sqrt m]$, with $\delta>0$ sufficeintly small.
%We  expand $f$ in a Taylor series about $R$, and use this to bound the terms appearing in \eqref{e:gmid}, as follows:
%\[
% f(r) = (r-R)f'(R)\left(1 + \frac {f''(R)}{2f'(R)}(r-R) + O(m^{-1}(r-R)^2)\right),
%\]
%\[
% \zeta_O(r) = (r-R)f'(R)^{1/3}\left(1 +  \frac {f''(R)}{10f'(R)}(r-R) + O(m^{-1}(r-R)^2)\right),
%\]
%\[
% \hat f(r) = f'(R)^{2/3}\left(1 +  \frac {2f''(R)}{5f'(R)}(r-R) + O(m^{-1}(r-R)^2)\right),
%\]
%since $f'(R) \le - m^{-3/2}/C$ and $|f'''(r)| \le C m^{-5/2}$.
%Using also $|f''(r)| \le C m^{-2}$ gives
%\[\begin{split}
% \hat f'(r) &= \frac{f'(r)\zeta_O(r) - f(r)\zeta_O'(r)}{\zeta_O(r)^2} =  \frac{f'(r) - \hat f(r)^{3/2}}{\zeta_O(r)} \\&= \frac {2f''(R)}{5f'(R)^{1/3}}(1 +O(m^{-1/2}(r-R))) = O(m^{-3/2}).
%\end{split}\]
%Finally
%\[\begin{split}
% \hat f''(r) =  \frac{f''(r) - \frac 52\hat f(r)^{1/2}\hat f'(r)}{\zeta_O(r)} = O(m^{-2}).
%\end{split}\]
%Inserting the above into \eqref{e:gmid} and using  $|f'(R)| \le C m^{-3/2}$ and  $|g(r)| \le C m^{-1}$ gives
%\[
% |G(r)| \le C |r-R|^{-1/2}m^{-1/4},
%\]
%which implies \eqref{e:gmidbound}.
%\end{proof}

\begin{proof}[Proof of \eqref{e:gnonmidbound}]
%This time we use \eqref{e:gnonmid}, which we rewrite in the form
%\begin{equation}\label{e:gnonmid2}
% G(r) = f(r)^{-1/2}\left(\tfrac 5 {16} f(r)^{-2}f'(r)^2  - \tfrac 14 f(r)^{-1} f''(r) - g(r) - \tfrac 5{16} f(r) \zeta_O(r)^{-3}\right),
%\end{equation}
%for $r < R$, and in a similar form for $r >R$.

To bound the first term in \eqref{e:gnonmidbound} we use
\[
m|f(r)| + m^{-1}|f(r)^{-1}|+ r^2|g(r)| + r^3|f'(r)| + r^4|f''(r)| \le C,
\]
and
\begin{equation}\label{e:zetarlarge}
 \left|\tfrac 23 m^{1/2} \zeta_O(r)^{3/2}  -r\sqrt {E}\right| =   \left|\int_R^r \frac{V_{m'}(r')dr'}{ (\sqrt{E-V_{m'}(r')} + \sqrt E)}  +R\sqrt {E}\right|  \le C,
\end{equation} 
for $r \ge R + \delta \sqrt m$. These imply
\[
 |G(r)| \le C m^{1/2}r^{-2},
\]
for $r \ge R + \delta \sqrt m$, which implies the bound on the first term in \eqref{e:gnonmidbound}.

To bound the second term in \eqref{e:gnonmidbound}, we use
\[
r^2f(r) + % r^{-2}|f(r)^{-1}|+ r^2|g(r)| + r^3|f'(r)| + r^4|f''(r)|  +
 \ln^{-2}(R/r)\zeta_O(r)^{-3} \le C,
\]
for $r \le R-\delta  \sqrt m$ (for the last term c.f. \eqref{e:zetarsmall}), which implies
\begin{equation}\label{e:gsmall1}
\int_0^{R-\delta  \sqrt m}f(r)^{1/2} \zeta_O(r)^{-3}dr \le C.
\end{equation}
Let $F = \lim_{r \to 0}r^2f$, so that for $k \in \{0, \, 1, \, 2\}$ we have
\[
 \partial_r^k f(r) = (-1)^k(k+1)!F r^{-2 - k}(1 + O(m^{-1/2}r)),
\]
for $r\le R - \delta \sqrt m$. Then, since $F \ge 1/C$, we have
\[
f(r)^{-1/2}\left|5 f(r)^{-2}f'(r)^2  - 4f(r)^{-1} f''(r) - 16g(r)\right| \le C m^{-1/2}, 
\]
and inserting into \eqref{e:gabs} and combining with \eqref{e:gsmall1} gives the bound on the second term in \eqref{e:gnonmidbound}.
\end{proof}

\end{document}